\newtheorem{theorem}{Theorem}[section]
\newtheorem{prop}[theorem]{Proposition}
\newtheorem{conj}[theorem]{Conjecture}
\theoremstyle{definition}
\newtheorem{definition}[theorem]{Definition}
\theoremstyle{remark}
\numberwithin{equation}{section}
\newcommand{\la}{\lambda}
\newcommand{\R}{\mathbb{R}}
\definecolor{forest}{rgb}{0.03, 0.47, 0.19}
\begin{document}

\title{Structure of the chromatic polynomial}


\author{Radmila Sazdanovic}
\address{Department of Mathematics, North Carolina State University}
\email{rsazdanovic@math.ncsu.edu}
\thanks{RS was partially supported by NSF grant DMS-1854705.}

\author{Daniel Scofield}
\address{Department of Mathematics, Francis Marion University}
\email{daniel.scofield@fmarion.edu}

\subjclass[2020]{Primary 05C31, 55N31} 

\date{\today}

\begin{abstract} 
Recently, big data techniques such as machine learning and topological data analysis have made their way to theoretical mathematics.  Motivated by the recent work with polynomial invariants for knots, we use manifold learning and topological data analysis techniques to explore the structure and properties of the point cloud consisting of the chromatic polynomials of graphs up to 10 crossings. Although chromatic, as well as the Tutte polynomial fail to distinguish graphs, according to a conjecture by Bollob{\'a}s, Pebody and Riordan they approximate the space of random graphs. In this work we compare structures in the chromatic data revealed using filtered PCA and Ball Mapper techniques, and relate them with a range of numerical invariants for graphs.  
\end{abstract}

\maketitle

\section{Introduction}

Graph invariants are properties of graphs that are invariant under graph isomorphism. Their definition implies they can be used to distinguish graphs: if values of the invariant are different the graphs are non-isomorphic, but it is possible to have non-isomorphic graphs with the same invariant. Invariants are abundant in graph theory, as well as in other areas of mathematics such as low-dimensional topology. They may be numerical, polynomials, or more involved algebraic constructions such as homology theories. 
Examples of such invariants for graphs include numbers of vertices, edges, and cycles, connectivity, vertex degrees, and chromatic number, the chromatic, Tutte and magnitude polynomials, the Stanley chromatic function \cite{whitney1932logical, tutte1954contribution, leinster2013magnitude, stanley1995symmetric}, and the corresponding homology theories categorifying these polynomial invariants \cite{helme2005categorification,eastwood2007euler, hepworth2017categorifying, sazdanovic2018categorification}. 

Recent years have brought significant advances of artificial intelligence and machine learning methods in graph theory, which have proven challenging partially due to the heterogeneity of graphs and the wide range of their applications. In this paper we take a different approach motivated by the conjecture of Bollob{\'a}s, Pebody and Riordan stating that almost every pair of independently chosen random graphs are distinguished by their Tutte and chromatic polynomials. Hence, instead of studying graphs directly, we examine the data derived from their chromatic polynomials. This idea of using invariants instead of objects has successfully been implemented in knot theory \cite{jejjala2019deep, hughes2020neural, levitt2019big, davies2021advancing, pawel2021knot}. More precisely, we use ideas introduced in the work of D{\l}otko, Gurnari, Hajij, Levitt, and the first author, applying manifold learning and topological data analysis, and compare the results of these two approaches \cite{levitt2019big, pawel2021knot}.

In this paper, Principal Component Analysis (PCA), a widely used dimension reduction technique, filtered PCA introduced in \cite{levitt2019big}, and TDA's Ball Mapper algorithm are applied to chromatic polynomial data for graphs with a fixed number of vertices. The dimensionality of chromatic data for graphs of all orders up to 10 appears to be one, with the first principal component influenced by all non-trivial chromatic coefficients. The corresponding Ball Mapper graph (BMGraph) has remarkable and intriguing resemblance with the PCA structure. Though the evidence is only empirical, the first principal component (PC1) corresponds to the elongated, linear structure of the Ball Mapper graph, and the second principal component (PC2) corresponds, informally, to its ``width". 

The structure of the chromatic data is further explored with respect to various graph invariants using Ball Mapper as a tool for visualizing functions between high dimensional spaces.  The linear structure of the  Ball Mapper, analogous to the first Principal component,  is determined by the number of edges in a graph, which is consistent with the prominence of edges $E$ in coefficient formulas and the PCA observations.  When colored by the corresponding values of PC2, the linear structure of the BMGraph corresponding to PC1  splits into concurrent/parallel strands  determined by increasing values of PC2. Similarly, for a fixed number of edges and vertices, the collection of clusters in the Ball Mapper graph corresponds to a slice of points in the 2-dimensional PCA projection. 
Clusters on both ends of each slice
in the PCA projection are 
graphs whose chromatic coefficients are extremal in the Rodriguez/Satyanarayana poset \cite{rodriguez1997chromatic}.

Our analysis facilitates the study of numerical graph invariants through dimensionality reduction and clustering. Recent work on graph compression summarized in Section \ref{compress} suggests a correlation between the chromatic polynomial and various measures of graph irregularity. While these measures are incomparable, our two-dimension projection of chromatic data, combined with Ball Mapper, reveals clusters of graphs which tend to maximize or minimize these invariants among all graphs with the same numbers of vertices and edges, see Section \ref{extrirr}.

For example, on one end of the spectrum defined by our second principal component PC2, we find clusters of  graphs with smallest chromatic coefficients and maximum irregularity scores, which tend to have more triangles, more cliques (complete subgraphs), and higher differences between vertex degrees; e.g., a  complete graph with one pendant edge. The other end of the spectrum features graphs with largest chromatic coefficients and minimum irregularity scores; which  tend to have a more regular structure, greater numbers of large cycles, and generally avoid cliques; e.g.,  Turan graphs as typical examples. 

This work leaves a number of natural questions unanswered and raises  more. Some of the questions to be addressed in the upcoming work are the following:

\noindent \textbf{Question 1} One of the main requirements in this approach is that the results are stable with respect to the filtration of our data, in this case by the number of vertices. It is quite remarkable that both dimensionality and Ball Mapper structure were stable, in addition to being consistent across methods. While filtered PCA \cite{levitt2019big} provides a reliable way of dimensionality reduction for data where finding a representative sample is a challenge, the next natural step is to apply these methods to a set of random graphs. 

\noindent \textbf{Question 2} Big data techniques, such as machine learning and topological data analysis have only recently been applied to data arising from theoretical mathematics where they could provide valuable insights and conjectures. This was achieved in knot theory \cite{davies2021advancing} where the relation between knot invariants was discovered by AI and later proven \cite{davies2021signature}. In this paper we provide insights into the missing theoretical characterization of chromatically maximal graphs, or the features corresponding to graph irregularity, which is quite elusive as the most prominent irregularity measures seem to be mutually incomparable \cite{abdo2019graph}. The scope and types of results in graph or knot theory that can be obtained using these approaches is an interesting open question.  

\noindent \textbf{Question 3} Methods used in this paper can be readily extended to other graph polynomials. The Tutte polynomial is particularly interesting as it specializes both to the chromatic and the Jones polynomial for knots. Determining the dimensionality of Tutte data is an exciting open problem since it should be at least 3, since that is the dimensionality of the Jones polynomial data \cite{levitt2019big}. Furthermore, Relational Ball Mapper or Mapper-on-Ball Mapper \cite{pawel2021knot}  constructions can be used to explore relations between the Tutte and chromatic, or Tutte and Jones polynomial. 

To summarize, this approach provides a new way of vectorizing graphs, 
and opens up another area of theoretical mathematics to big data techniques. In addition to immediate extensions of this work to other polynomials such as Tutte there are exciting but less clear goals to determining statistical nature of polynomial and other graph invariants and relations between them, as well as potential applications to other sciences.

\section*{Acknowledgements}
The authors extend their heartfelt gratitude to Pawel D{\l}otko and Davide Gurnari
for many helpful conversations and support with Ball Mapper. 
\section{Background}

In this section we provide necessary background in graph theory, focusing on the chromatic polynomial and irregularity measures. 

Let $G = (V, E)$ be a graph, with vertex set $V$ and edge multiset $E$. If $G$ has an edge between vertices $v_i, v_j \in V$, we write the corresponding element in $E$ as $\{v_i,v_j\}$.

\begin{definition}
The order of a graph  $G$ is the number of vertices $n = |V|$.
A loop in graph $G$ is an edge of the form $\{x, x\}$ for some $x \in V$. A multiple edge is an element that occurs multiple times in the multiset $E$. We say that $G$ is a simple graph if it has no loops or multiple edges.
\end{definition}

\begin{definition}
Let $v$ be a vertex of a simple graph $G$. The degree of vertex $v$, denoted $\deg(v)$, is the number of edges incident to $v$.
\end{definition}

\begin{definition}
A path in graph $G$ is a finite sequence of vertices $v_1, v_2, \ldots, v_n$ such that $\{v_i, v_{i+1}\}$ is an edge of $G$ for $i=1, 2, \ldots, n-1$ and all $v_i$ are distinct.
\end{definition}

\begin{definition}
A graph $G = (V, E)$ is connected if any two vertices in $G$ are joined by a path along edges of $G$. 
A bridge in a connected graph $G$ is an edge $e \in E$ such that the graph $(V, E - \{e\})$ is disconnected. A pendant edge is an edge incident to a vertex of degree one.
\end{definition}

\begin{definition}
The girth of graph $G$ is the length of the shortest cycle in $G$.
\end{definition}

\begin{definition}
Let $H = (V_H, E_H)$ be a subgraph of graph $G = (V, E)$. We say $H$ is an induced subgraph if for every $\{v_i,v_j\} \in E$ with $v_i,v_j \in V_H$, the edge $\{v_i,v_j\}$ is in $E_H$.
\end{definition}

\begin{definition}
The complete graph $K_n$ is the graph which consists of $n$ vertices and $\binom{n}{2}$ edges connecting each distinct pair of vertices.
A clique of size $m$ is a subset of vertices of $G$ whose induced subgraph is isomorphic to $K_m$. The clique number $\omega(G)$ is the maximum size over all cliques in $G$.
\end{definition}

\subsection{The chromatic polynomial}

\begin{definition}[\cite{whitney1932logical, Read1, DKT}] \label{ChromDef}
	Let $G = (V, E)$ be a graph. A mapping $f:V \to \{1, 2, \ldots, \la\}$ is called a $\la$-coloring of $G$ if for any pair of vertices $x,y \in V$ such that $\{x,y\} \in E$, $f(x) \neq f(y)$. Two $\la$-colorings $f, g$ are distinct if $f(x) \neq g(x)$ for some vertex $x \in V$. The chromatic polynomial of the graph $G$, denoted $P_G(\la)$, is equal to the number of distinct $\la$-colorings of $G$.
\end{definition}

The degree of the chromatic polynomial is equal to the order of $G$, and thus we write the polynomial with coefficients $c_{n-i}$ as follows:
\begin{align*}
    P_G(\la) = c_n\la^n +  c_{n-1}\la^{n-1} + c_{n-2}\la^{n-2} + \ldots + c_1 \la + c_0
\end{align*}
Since no graph has a $0$-coloring, the roots of $P_G(\la)$ include zero, which forces the free term to be zero $c_0=0$ for all graphs.

The chromatic polynomial of a disjoint union of graphs is the product of chromatic polynomials of its components. Additionally, adding multiple edges does not change the chromatic polynomial, while the existence of a loop trivializes it. Therefore, in this paper  we consider only simple connected graphs.

\begin{theorem}\cite{Meredith1} \label{Meredith}
If $G$ is a graph with $m$ edges, girth $g>2$, and $n_{g}$ cycles of length $g$, then the first $g$ coefficients of the chromatic polynomial are:
$$c_{n-i} = \begin{cases}
(-1)^i \displaystyle\binom{m}{i} & 0 \le i < g - 1\\
(-1)^{g-1} \left( \displaystyle\binom{m}{g-1} - n_{g}\right) & i = g - 1\\
\end{cases}$$

\end{theorem}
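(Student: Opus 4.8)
The plan is to deduce Theorem~\ref{Meredith} from Whitney's broken-circuit theorem. Fix a linear order on the edge set $E$. A \emph{broken circuit} is an edge set of the form $C \setminus \{e\}$, where $C$ is the edge set of a cycle of $G$ and $e$ is the largest edge of $C$ in the chosen order; Whitney's theorem then gives
$$P_G(\la) = \sum_{j \ge 0} (-1)^j\, b_j\, \la^{n-j},$$
where $b_j$ counts the $j$-element subsets of $E$ containing no broken circuit. Thus $c_{n-i} = (-1)^i b_i$, and it suffices to evaluate $b_i$ for $0 \le i \le g-1$. The single observation driving both cases is that every cycle of $G$ has at least $g$ edges, so every broken circuit has at least $g-1$ edges. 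Consequently, when $0 \le i < g-1$, an $i$-element subset has too few edges to contain a broken circuit, so $b_i = \binom{m}{i}$ and $c_{n-i} = (-1)^i\binom{m}{i}$, which is the first case.

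The case $i = g-1$ is the crux. A $(g-1)$-element subset $S$ that contains a broken circuit must in fact \emph{equal} that broken circuit, since every broken circuit has at least $g-1 = |S|$ edges; and a broken circuit of size exactly $g-1$ has the form $C \setminus \{\max C\}$ for a cycle $C$ of length $g$. I would then verify that the assignment $C \mapsto C \setminus \{\max C\}$ is a bijection from the set of $g$-cycles of $G$ onto the set of $(g-1)$-element broken circuits: surjectivity holds by definition, and for injectivity note that $C \setminus \{\max C\}$ is a path on $g$ vertices whose two endpoints are, in the simple graph $G$, joined by a unique edge, which must therefore be $\max C$, so $C$ is recovered from the path. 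Hence exactly $n_g$ of the $\binom{m}{g-1}$ subsets of size $g-1$ are excluded, giving $b_{g-1} = \binom{m}{g-1} - n_g$ and $c_{n-(g-1)} = (-1)^{g-1}(\binom{m}{g-1} - n_g)$.

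The main obstacle is precisely the bookkeeping in this last step: one must be certain that a $(g-1)$-subset is excluded \emph{only} when it is the broken circuit of a $g$-cycle, and that distinct $g$-cycles never yield the same broken circuit. A parallel argument can be run from Whitney's subgraph expansion $P_G(\la) = \sum_{S \subseteq E}(-1)^{|S|}\la^{c(S)}$, grouping terms by the rank $r(S) = n - c(S)$; there the analogous point is that for $r(S) \le g-1$ the only non-forest subgraphs contributing are single $g$-cycles, which reduces to checking that any subgraph with cycle rank at least $2$ and girth $g > 2$ has rank at least $g$, the extremal configuration being a theta subgraph. I expect the broken-circuit formulation to be the cleaner of the two to carry out in full.
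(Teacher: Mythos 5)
Your argument is correct. Note that the paper does not prove this statement at all; it is quoted from Meredith's paper as background, so there is no in-text proof to compare against. Your broken-circuit route is a clean, self-contained derivation: the first case is immediate once you observe that every broken circuit has at least $g-1$ edges, and the crux you identified at $i=g-1$ is handled properly — a $(g-1)$-set containing a broken circuit must equal one of size exactly $g-1$, hence arises from a $g$-cycle, and your injectivity argument (the deleted edge is the unique edge of $G$ joining the endpoints of the $(g-1)$-edge path, which exists and is unique because girth $g>2$ forces $G$ to be simple with no parallel edge available) correctly shows distinct $g$-cycles give distinct broken circuits, so exactly $n_g$ subsets are excluded and $b_{g-1}=\binom{m}{g-1}-n_g$. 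The only point worth making explicit in a write-up is that this count is independent of the chosen edge ordering, which is automatic since the chromatic coefficients themselves do not depend on it. Your alternative sketch via the Whitney rank expansion would also work, but as you say it requires the extra extremal observation that a subgraph of cycle rank at least $2$ in a graph of girth $g$ has at least $g+1$ edges (theta-type configurations), so the broken-circuit version is indeed the cleaner one.
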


For a detailed summary of graph invariants which are determined by the chromatic polynomial see \cite{noy2003graphs}.

\begin{prop}[\cite{noy2003graphs}] \label{chrom_determine}
For a connected graph $G$, the chromatic polynomial $P_G(\la)$ determines the numbers of vertices, edges, triangles, and blocks in $G$, as well as the girth $g$ and the number of cycles of length $g$.
\end{prop}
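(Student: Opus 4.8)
The plan is to read each quantity off specific coefficients of $P_G(\la)$, using Theorem~\ref{Meredith} for the cycle data and the block decomposition of $P_G$ for the number of blocks. The simplest invariants come first: $P_G$ is monic of degree $|V|$, so the order $n=\deg P_G$ is determined; the coefficient $c_{n-1}=-m$ (Whitney's theorem, or Theorem~\ref{Meredith} with $i=1$ when $G$ has a cycle) gives the size $m=|E|$, and in particular $G$ is a tree exactly when $m=n-1$; and the classical identity $c_{n-2}=\binom{m}{2}-t$, with $t$ the number of triangles (which is also what Theorem~\ref{Meredith} with $i=2$ yields, whether $g=3$, so that $t=n_g$, or $g>3$, so that $t=0$), gives $t=\binom{m}{2}-c_{n-2}$, determined since $m$ is.

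For the girth $g$ and the number $n_g$ of cycles of length $g$, assume $G$ is not a tree, so that $g$ is finite and $\ge 3$; the tree case is vacuous and is already detected by $m=n-1$. Put $d_i:=c_{n-i}-(-1)^i\binom{m}{i}$ for $i\ge 0$, which is computable from $P_G$ since $m$ is known. Theorem~\ref{Meredith} gives $d_i=0$ for $0\le i\le g-2$ and $d_{g-1}=(-1)^{g}\,n_g$, and $n_g\ge 1$ because $G$ has a shortest cycle; hence $g=1+\min\{\,i:d_i\ne 0\,\}$ and $n_g=(-1)^g d_{g-1}$ are determined.

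For the number of blocks $b$, I would use the multiplicativity of the chromatic polynomial across a cut vertex, $P_{G_1\cup_v G_2}(\la)=P_{G_1}(\la)\,P_{G_2}(\la)/\la$ when $G_1\cap G_2=\{v\}$; induction along the block--cut tree then gives $P_G(\la)=\la^{1-b}\prod_{j=1}^{b}P_{B_j}(\la)$, where $B_1,\dots,B_b$ are the blocks of $G$. Each $B_j$ is either $K_2$ or a $2$-connected graph, and in either case $\la=1$ is a \emph{simple} root of $P_{B_j}$, so $P_{B_j}(\la)=\la(\la-1)R_j(\la)$ with $R_j(1)\ne 0$. Substituting, $P_G(\la)=\la(\la-1)^{b}\prod_j R_j(\la)$ with $\prod_j R_j(1)\ne 0$, so $b$ equals the multiplicity of $\la=1$ as a root of $P_G$ and is therefore determined.

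The one nontrivial input, and the main obstacle, is the claim used in the last step: that $\la=1$ is a simple root of $P_H$ for every $2$-connected graph $H$. I would prove this by induction along an open ear decomposition, with base case a cycle $C_\ell$, for which $P_{C_\ell}(\la)=(\la-1)\big[(\la-1)^{\ell-1}+(-1)^\ell\big]$ has a simple root at $1$, and inductive step recording the effect of adjoining an ear of length $\ell$ between $u,v\in V(H)$ via $P_{H'}(\la)=S_\ell(\la)P_{H/uv}(\la)+D_\ell(\la)P_{H+uv}(\la)$, where $S_\ell,D_\ell$ are the transfer polynomials counting proper colourings of a path whose two endpoints receive equal, respectively distinct, colours, and checking that the order of vanishing at $\la=1$ stays equal to $1$. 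Equivalently, the statement that the multiplicity of $\la=1$ as a root of $P_G$ equals the number of blocks of a connected graph $G$ is classical and may be cited directly from \cite{DKT} or \cite{noy2003graphs}.
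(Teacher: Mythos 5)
The paper itself gives no proof of this proposition --- it is quoted verbatim from the survey \cite{noy2003graphs} --- so the comparison is between your argument and the standard one, which you essentially reproduce. Reading off $n=\deg P_G$, $m=-c_{n-1}$ and the triangle count from $c_{n-2}=\binom{m}{2}-t_1$, recovering $g$ and $n_g$ from the first index where $c_{n-i}$ deviates from $(-1)^i\binom{m}{i}$ via Theorem~\ref{Meredith}, and identifying the number of blocks with the multiplicity of the root $\la=1$ through $P_G(\la)=\la^{1-b}\prod_j P_{B_j}(\la)=\la(\la-1)^b\prod_j R_j(\la)$ is exactly how this result is classically established, and those steps are all correct.

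The one genuine gap is in your sketched ear-decomposition proof of the key lemma that $\la=1$ is a simple root of $P_H$ for $2$-connected $H$. In the identity $P_{H'}=S_\ell P_{H/uv}+D_\ell P_{H+uv}$ one has $S_\ell(1)=0$, $P_{H/uv}(1)=0$ and $D_\ell(1)=(-1)^{\ell+1}\neq 0$, so the order of vanishing of $P_{H'}$ at $\la=1$ equals that of $P_{H+uv}$, \emph{not} that of $H$; the inductive hypothesis you state does not apply to the chord-augmented graph $H+uv$. Worse, when the last ear is a chord ($\ell=1$) the identity degenerates to $P_{H'}=P_{H+uv}=P_{H'}$ and gives no reduction at all, and chord ears cannot always be avoided (every open ear decomposition of $K_4$ ends with one). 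To repair it you would need to reorganize the induction, e.g.\ on the pair $(|V|,|E|)$, handling chord additions by deletion--contraction together with the known sign fact that $(-1)^{n-1}P_G(\la)>0$ on $(0,1)$ for connected $G$ (equivalently, positivity of Crapo's $\beta$-invariant for $2$-connected graphs). Since you explicitly offer the legitimate alternative of citing the classical theorem that the multiplicity of $1$ as a root of $P_G$ equals the number of blocks (available in \cite{DKT}), your overall proof stands with that citation, but the ear-decomposition sketch as written does not close the lemma on its own.
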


It was long conjectured \cite{Read1} that the coefficients of $P_G(\la)$ are unimodal in the following sense: if $h_i = |c_{n-i}|$, there exists some $j$ with $2 \le j \le n-1$ such that
$$h_n \le h_{n-1} \le \ldots \le h_{j-1} \le h_j \ge h_{j+1} \ge \ldots \ge h_0.$$
This result has since been proven \cite{huh2012milnor} by showing that the stronger property of log-concavity holds for this sequence: $h_{i-1}h_{i+1} \le h_i^2$ for all $0<i<n$.

\begin{definition} A graph is chromatically unique if no non-isomorphic graph has the same chromatic polynomial.
\end{definition}
Cycle graphs, $\theta$-graphs \cite{CW1} and graphs which consist of wheel graphs with all but three or four adjacent spokes removed \cite{CW2} are families of chromatically unique graphs. 

There are also infinitely many graphs which are not chromatically unique; i.e., non-isomorphic graphs which share a chromatic polynomial. Examples include trees with $n$ vertices, whose chromatic polynomials all have the form $\lambda (\lambda-1)^{n-1}$ regardless of structure \cite[Theorem 13]{Read1}.
 
For any connected graph $G$, the first six coefficients of the chromatic polynomial have known formulas computed by counting subgraphs of $G$.

\begin{theorem}\label{Farrell}\cite{Farrell, Bielak1}
The first four coefficients of the chromatic polynomial $P_G(\la)$ are given by the following formulas: $c_n = 1$, $c_{n-1} = -m$, $c_{n-2} = \displaystyle\binom{m}{2}-t_1$, and $c_{n-3} = -\displaystyle\binom{m}{3}+(m-2)t_1+t_2-2t_3$, where $m$ is the number of edges in $G$, $t_1$ is the number of triangles, $t_2$ is the number of induced 4-cycles (pure squares with no diagonal present), and $t_3$ is the number of complete subgraphs on 4 vertices.

The 5th and 6th coefficients are given by the following formulas, where the $t_i$s count induced subgraphs of $G$ on 5 and 6 vertices specified in \cite{Bielak1}. For example, $t_4$ is the number of induced 5-cycles (pure pentagons) and $t_8$ is the number of induced graphs isomorphic to $K_5$.
\begin{eqnarray*}
c_{n-4} &=& \binom{m}{4} - \binom{m-2}{2}t_1 + \binom{t_1}{2} - (m-3)t_2 -(2m-9)t_3 - t_4 +t_5 + 2t_6+\\ && 3t_7-6t_8\\
c_{n-5} &=& -\binom{m}{5} + \binom{m-2}{3}t_1 - (m-4)\binom{t_1}{2} + \binom{m-3}{2}t_2 - (t_2-2t_3)t_1 +t_4\\ &-& (m^2-10m+30)t_3
-(m-3)t_5-2(m-5)t_6-3(m-6)t_7+6(m-8)t_8\\&+&t_9-t_{10}-2t_{11}-2t_{12}-t_{13}+t_{14}-t_{15}-3t_{16}-4t_{17}-4t_{18} +2t_{19}-4t_{20}\\&-&t_{21}+4t_{22}+3t_{23}+4t_{24}+5t_{25}+4t_{26}+6t_{27}+8t_{28}+16t_{29}+12t_{30}-24t_{31}
\end{eqnarray*}
\end{theorem}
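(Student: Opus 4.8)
The plan is to read off the coefficients from the subgraph (rank--nullity) expansion of the chromatic polynomial. By iterating deletion--contraction, or equivalently by inclusion--exclusion over the edge set, one obtains Whitney's formula
\[
P_G(\la) \;=\; \sum_{S \subseteq E} (-1)^{|S|}\, \la^{\,c(S)},
\]
where $c(S)$ is the number of connected components of the spanning subgraph $(V,S)$. If $H(S)$ denotes the subgraph spanned by $S$ after deleting isolated vertices, and $v(H)$, $e(H)$, $c(H)$ are its numbers of vertices, edges, and components, then $c(S) = n - i$ is equivalent to $v(H(S)) - c(H(S)) = i$. Grouping edge subsets by the isomorphism type of $H(S)$ therefore gives, for every $i$,
\[
c_{n-i} \;=\; \sum_{H} (-1)^{e(H)}\, a_G(H),
\]
where the sum runs over all isomorphism types $H$ with no isolated vertices and $v(H) - c(H) = i$, and $a_G(H)$ is the number of subgraphs of $G$ isomorphic to $H$ (not necessarily induced). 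For $i = 0$ and $i = 1$ the only shapes are the empty graph and $K_2$, giving $c_n = 1$ and $c_{n-1} = -m$ immediately.

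The first step is to enumerate the finitely many shapes contributing to a given $i$. Since $v(H) - c(H) = \sum_{C}(v(C) - 1)$ over components $C$, and every component carrying an edge has at least two vertices, there are at most $i$ components and $v(H) \le 2i$; a connected shape must have exactly $i + 1$ vertices. A short case analysis lists, for $i = 2$, the shapes $2K_2$, $P_3$, $K_3$; for $i = 3$ a list of nine shapes ($3K_2$, $P_3 \cup K_2$, $K_3 \cup K_2$, $P_4$, $K_{1,3}$, the triangle with a pendant edge, $C_4$, $K_4$ minus an edge, $K_4$); and longer lists for $i = 4, 5$, the largest connected shapes being $K_5$ and $K_6$ respectively.

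The second step rewrites $a_G(H)$ in terms of the induced-subgraph counts $t_j$. For a disconnected $H$ one expands $a_G(H)$ by inclusion--exclusion into a polynomial in $m$ times counts of connected configurations obtained by merging its components; this is the mechanism that produces the coefficients $m - 2$, $m - 3$, $\binom{m-2}{2}$, $m^2 - 10m + 30$, and so on, and it also explains why only $m$ and the cycle-bearing counts survive (the tree-like contributions, such as the $\sum_v \binom{\deg v}{2}$ terms in the $i = 2$ computation, cancel identically). Each connected configuration $H'$ on $k \le i + 1$ vertices is then converted to induced counts via the standard identity $a_G(H') = \sum_{H''} \mathrm{sub}(H', H'')\, t_G(H'')$, where $H''$ ranges over graphs on $k$ vertices containing $H'$ as a spanning subgraph; after carrying the signs $(-1)^{e(H)}$ through, collecting terms, and simplifying the binomial identities, one matches the stated formulas. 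For $c_{n-2}$ and $c_{n-3}$ this is short; for $c_{n-4}$ and $c_{n-5}$ it is the bulk of the work.

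The main obstacle is precisely this bookkeeping for $i = 4$ and, especially, $i = 5$: one must enumerate every connected configuration on up to six vertices, keep track of all automorphism and multiplicity factors while passing among labelled copies, non-induced counts $a_G$, and the induced counts $t_1, \dots, t_{31}$, and verify the many binomial cancellations that collapse the disconnected and tree-decorated terms. The argument is conceptually routine but long and error-prone, which is why the $c_{n-5}$ formula is so unwieldy and why its detailed verification is the content of \cite{Bielak1}.
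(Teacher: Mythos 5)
The paper itself offers no proof of this theorem: it is imported verbatim from \cite{Farrell, Bielak1}, so there is no internal argument to compare against. Your sketch follows exactly the route those sources take: Whitney's expansion $P_G(\la)=\sum_{S\subseteq E}(-1)^{|S|}\la^{c(S)}$, the observation that $c(S)=n-\bigl(v(H(S))-c(H(S))\bigr)$, grouping edge subsets by the isomorphism type of the edge-spanned subgraph with $v-c=i$ (so at most $i$ components, connected shapes on exactly $i+1$ vertices), and then converting the non-induced counts $a_G(H)$ into the induced counts $t_j$ via the spanning-subgraph identity. Your shape lists for $i\le 3$ are right, the $c_{n-2}$ computation with the $\sum_v\binom{\deg v}{2}$ cancellation is correct, and the conversion identity you quote is the standard one, so nothing in the approach would fail. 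The one caveat is that for $c_{n-4}$ and especially $c_{n-5}$ you describe the bookkeeping but do not carry it out, deferring the verification of the thirty-odd coefficients to \cite{Bielak1}; since beyond the method that verification \emph{is} the content of the statement, what you have is an accurate and complete proof strategy (matching the cited derivations) rather than a self-contained proof of the two long formulas.
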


Note that Whitney's original paper introducing the chromatic polynomial gives an interpretation of each $c_{n-i}$ in terms of spanning subgraphs of $G$ \cite{whitney1932logical}. Gian-Carlo Rota showed that each coefficient can be expressed as a M{\"o}bius function on a poset whose elements are subsets of $E$, and that the sum of the coefficients of $P_G(\la)$ is zero \cite{rota1964foundations}. Moreover, Meredith showed that the size of every coefficient is bounded by $|c_{n-i}| \leq \binom{m}{i}$ \cite{Meredith1}. These upper bounds are achieved for the first $g$ coefficients of the chromatic polynomial when all cycles of $G$ have length at least $g$.

In \cite{kahl2022extremal} the author defines a Tutte polynomial poset and characterizes the minimal graphs in this poset. With a similar motivation, we define a chromatic polynomial poset as follows.

\begin{definition}
Let $G(n, m)$ denote the set of simple connected graphs with n vertices and m edges. The chromatic polynomial poset is defined on $G(n, m)$ by $H \le G$ if and only if $P_G(\la) - P_H(\la) = r(x)$ for some polynomial $r(x)$ with non-negative coefficients. In other words, if we denote chromatic polynomial coefficients by $c_i$, then $H \le G$ if and only if  $|c_i(H)| \le |c_i(G)|$ for all $1 \le i \le n$.
\end{definition}

Rodriguez and Satyanarayana \cite{rodriguez1997chromatic} give a complete characterization of the graphs which are minimal in the chromatic polynomial poset.

\begin{definition}[\cite{DKT}]\label{minfams}
 Let $J(n,m)$ be the family of graphs $G \in G(n,m)$ such that all blocks of $G$ are complete graphs, including possibly triangles $K_3$ and double edges $K_2$.
 Let $L(n,m)$ be the family of graphs $G \in G(n,m)$ such that one block $B$ of $G$ has clique number $\omega(B) \ge |V(B)|-1$ and all other blocks are $K_2$s. 
\end{definition}

Note that $B$ is a complete graph or contains a complete subgraph containing all but one of the vertices of $B$, while all other blocks are bridges or pendant edges.

\begin{prop}[\cite{rodriguez1997chromatic}]
Any pair of graphs $G, H \in L(n,m) \cup J(n,m)$, have the same chromatic polynomials $P_G(\la) = P_H(\la).$
\end{prop}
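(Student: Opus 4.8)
The natural route is through the multiplicativity of the chromatic polynomial over blocks. \textbf{Step 1: reduce to blocks.} Recall that if a connected graph is obtained by identifying a single vertex of $G_1$ with a single vertex of $G_2$, then $P_{G_1\cup G_2}(\la)=P_{G_1}(\la)P_{G_2}(\la)/\la$; iterating along the block--cut tree, a connected graph $G$ with blocks $B_1,\dots,B_k$ satisfies
\[
P_G(\la)=\la^{\,1-k}\prod_{i=1}^{k}P_{B_i}(\la).
\]
Thus $P_G$ depends only on the multiset of isomorphism types of the $B_i$, and not on how the blocks are glued together into a tree. It therefore suffices to show that this multiset of block polynomials (equivalently, the multiset of factors $P_{B_i}(\la)/\la$) is the same for every $G\in L(n,m)\cup J(n,m)$ once $n$ and $m$ are fixed.

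\textbf{Step 2: the block polynomials.} For a complete block, $P_{K_r}(\la)=\la(\la-1)\cdots(\la-r+1)$, so a $K_2$-block (a bridge or pendant edge) contributes the factor $\la-1$, a $K_3$ contributes $(\la-1)(\la-2)$, and so on. For the distinguished block $B$ of an $L(n,m)$ graph, write $r=|V(B)|$. If $\omega(B)=r$ then $B=K_r$ and we are in the previous case. If $\omega(B)=r-1$, then $B$ is a clique $K_{r-1}$ together with one extra vertex $w$ adjacent to exactly $j$ of the clique vertices, where $2\le j\le r-2$: the lower bound because $B$, being a block on at least three vertices, is $2$-connected, and the upper bound because $\omega(B)\neq r$. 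Colouring $K_{r-1}$ first and then $w$ yields $P_B(\la)=\la(\la-1)\cdots(\la-r+2)\,(\la-j)$. So every block of every graph in $L(n,m)\cup J(n,m)$ contributes a string of factors $(\la-1),(\la-2),\dots$ together with at most one ``defect'' factor $(\la-j)$ coming from a non-complete distinguished block.

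\textbf{Step 3: match using $n$ and $m$.} It remains to do the bookkeeping and verify that the resulting product is forced by $(n,m)$. For a $J(n,m)$ graph with block sizes $r_1,\dots,r_k$ we have $\sum(r_i-1)=n-1$ and $\sum\binom{r_i}{2}=m$; for an $L(n,m)$ graph with distinguished block on $r$ vertices and $b=n-r$ bridges we have $m=\binom{r-1}{2}+j+b$, or $m=\binom{r}{2}+b$ if that block is $K_r$. One then compares $\la\prod_i\bigl(P_{B_i}(\la)/\la\bigr)$ for two such graphs coefficient by coefficient: the number of linear factors is always $n-1$, the coefficient of $\la^{\,n-1}$ is $-m$ for any graph in $G(n,m)$, and one proceeds up the coefficient list. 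I expect this matching to be the main obstacle --- one must show that no genuine ambiguity in the multiset of roots survives these constraints, i.e.\ that $(n,m)$ really pins down the whole factorisation --- and it is precisely the point where the full strength of the Rodriguez--Satyanarayana classification of the chromatically minimal graphs in $G(n,m)$ must be brought in, rather than the block decomposition alone; equivalently, one shows that the chromatic polynomial attains a unique minimum in the chromatic polynomial poset on $G(n,m)$ and that every member of $L(n,m)\cup J(n,m)$ realises it.
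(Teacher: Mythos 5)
The paper itself gives no argument for this proposition (it is quoted from Rodriguez--Satyanarayana), so your attempt has to stand on its own. Steps 1 and 2 are correct, and for the family $L(n,m)$ alone your plan can in fact be completed: writing $b=|V(B)|$ for the distinguished block, one has $e(B)=m-n+b$ and $\binom{b-1}{2}+2\le e(B)\le\binom{b}{2}$ (the lower bound from $2$-connectivity), and since the intervals $\bigl[\binom{b-1}{2}+2-b,\ \binom{b}{2}-b\bigr]$ for consecutive values of $b$ are disjoint, the value $m-n$ determines $b$, hence $B$ up to isomorphism and the number of $K_2$-blocks; so all members of $L(n,m)$ share one polynomial.

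The genuine gap is exactly Step 3, which you leave open, and it is not a bookkeeping formality: with $J(n,m)$ as literally defined in Definition \ref{minfams} (``all blocks complete''), the multiset of block sizes is \emph{not} forced by $(n,m)$ and the resulting polynomials genuinely differ. For instance, two copies of $K_4$ identified at a single vertex and $K_5$ with two pendant edges both have $7$ vertices and $12$ edges and all blocks complete, yet their chromatic polynomials are $\la(\la-1)^2(\la-2)^2(\la-3)^2$ and $\la(\la-1)^3(\la-2)(\la-3)(\la-4)$, which disagree (evaluate at $\la=4$). So no coefficient-by-coefficient comparison starting only from ``all blocks are complete plus $(n,m)$'' can pin down the factorisation; the proof must use the finer conditions in Rodriguez--Satyanarayana's actual definitions of these families, of which the statement here is a loose paraphrase. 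Your proposed fallback does not repair this: invoking ``the full strength of the Rodriguez--Satyanarayana classification'' is circular, since the co-chromaticity being proved is part of that classification, and minimality in the chromatic polynomial poset (Theorem \ref{minchromcoeffs}) cannot substitute for the missing matching argument, because a poset may have many pairwise incomparable minimal elements with distinct polynomials --- ``minimal'' is not ``minimum,'' so knowing every member of $L(n,m)\cup J(n,m)$ is minimal does not yield $P_G=P_H$.
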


\begin{theorem}[\cite{rodriguez1997chromatic}] \label{minchromcoeffs}
Any graph $H \in L(n,m) \cup J(n,m)$ is a minimal element of the poset $G(n,m).$

\end{theorem}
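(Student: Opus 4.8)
The plan is to prove the stronger statement that each $H \in L(n,m)\cup J(n,m)$ is a \emph{global} minimum of the poset $G(n,m)$: for every $G\in G(n,m)$ and every $i$ one has $|c_{n-i}(G)|\ge|c_{n-i}(H)|$. A global minimum is certainly minimal, and by the preceding Proposition all members of $L(n,m)\cup J(n,m)$ carry the same chromatic polynomial, so it suffices to treat one convenient representative $H$ for each pair $(n,m)$. For these the chromatic polynomial factors completely into linear terms: writing $H$ as a single large block $B$ together with bridges, a block $B=K_r$ contributes the factors $\la,(\la-1),\dots,(\la-r+1)$, an $L$-type apex of degree $p$ contributes one extra $(\la-p)$, and each bridge contributes one extra $(\la-1)$. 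Thus $P_H(\la)=\la\prod_{j=1}^{n-1}(\la-a_j)$ for an explicit multiset $a_1\le\dots\le a_{n-1}$ of nonnegative integers (``a staircase $1,2,\dots$ with extra $1$'s and at most one $p$''), whence $|c_{n-i}(H)|=e_i(a_1,\dots,a_{n-1})$, the $i$-th elementary symmetric polynomial in the $a_j$; the goal becomes $|c_{n-i}(G)|\ge e_i(a_1,\dots,a_{n-1})$.

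Set $h_i(G):=|c_{n-i}(G)|$. The engine is strong induction on $m$, driven by deletion--contraction. Since the coefficients of a chromatic polynomial alternate in sign (Whitney's broken-circuit theorem: $c_{n-i}=(-1)^i b_i$ with $b_i\ge 0$), the identity $P_G=P_{G-e}-P_{G/e}$ becomes the purely additive recursion
\[ h_i(G)=h_i(G-e)+h_{i-1}(G/e), \]
where $G/e$ denotes the simple graph obtained by contracting $e$ and discarding any resulting multiple edges; so $G/e$ has $n-1$ vertices and $m-1-t_e$ edges, $t_e$ being the number of triangles through $e$. The base case $m=n-1$ is immediate: $G(n,n-1)$ consists of trees, all with $P_G=\la(\la-1)^{n-1}$, and every tree lies in $J(n,n-1)$.

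For the inductive step assume $m\ge n$, so $G$ contains a cycle, hence a $2$-connected block on at least three vertices, hence a non-bridge edge. Two lemmas finish the argument. A \emph{monotonicity lemma}: passing from $(n',m')$ to $(n',m'+1)$ replaces the root multiset of a minimal representative by one in which a single $a_j$ is incremented (a $1$ becomes a $2$, then a $2$ a $3$, and so on, until an apex is absorbed into the clique); as $e_i$ is nondecreasing in each nonnegative argument, $h_{i-1}$ of a minimal representative is nondecreasing in $m'$ for fixed $n'$. An \emph{extremal-edge lemma} (the delicate step): every $G\in G(n,m)$ with $m\ge n$ has a non-bridge edge $e$ with $t_e\le t_{e_0}$, where $e_0$ is a distinguished edge of $H$ chosen so that $H-e_0$ is a representative of $G(n,m-1)$ and $H/e_0$ a representative of $G(n-1,m-1-t_{e_0})$ (for $J$-type $H$ take $e_0$ a clique edge, with $t_{e_0}=\omega(H)-2$; for $L$-type $H$ take $e_0$ an apex edge). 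Granting these, apply the recursion to this $e$, then the inductive hypothesis to $G-e\in G(n,m-1)$ and $G/e\in G(n-1,m-1-t_e)$, then the monotonicity lemma via $m-1-t_e\ge m-1-t_{e_0}$, and finally the identity $h_i(H)=h_i(H-e_0)+h_{i-1}(H/e_0)$ together with the preceding Proposition (which matches $P_{H-e_0}$ and $P_{H/e_0}$ with the chromatic polynomials of minimal representatives of the smaller parameter classes). This gives $h_i(G)\ge h_i(H)$, closing the induction.

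The main obstacle is the extremal-edge lemma. Averaging $t_e$ over the non-bridge edges gives $\min_e t_e\le 3t_1(G)/b_2(G)$, where $b_2(G)$ is the number of non-bridge edges, so one must bound $t_1(G)$ and relate $\omega(H)$ to $G$. This requires: (i) the classical fact that the triangle count over connected graphs in $G(n,m)$ is maximized by the clique-plus-tree, i.e.\ $t_1(G)\le t_1(H)$ (in line with $h_2(G)=\binom m2-t_1(G)$ from Theorem~\ref{Farrell}); (ii) a Kruskal--Katona / Tur\'an-type estimate on how many triangles a graph with $b_2(G)$ edges and no $K_{\omega(H)+1}$ can have; and (iii) the observation that $\omega(H)$ is the largest clique size attainable in $G(n,m)$. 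Making (i)--(iii) precise, and handling the bookkeeping that separates $J$-type from $L$-type representatives (where $t_{e_0}$ can be as small as $1$), is the real work. A more combinatorial alternative avoids the extremal input entirely: by Whitney's broken-circuit theorem $h_i(G)$ equals, for any edge ordering, the number of $i$-subsets of $E(G)$ containing no broken circuit, and one may try to exhibit an injection from the no-broken-circuit $i$-subsets of $H$ (easily enumerated when $H$ is a clique with pendant edges, namely $h_i(H)=\sum_j h_j(K_r)\binom{n-r}{i-j}$) into those of $G$; the difficulty there is selecting compatible orderings on the two edge sets.
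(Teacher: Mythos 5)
The paper does not actually prove this statement: Theorem~\ref{minchromcoeffs} is quoted from \cite{rodriguez1997chromatic}, so there is no internal argument to measure you against, and your proposal has to stand on its own. As it stands it does not: it is a strategy with its decisive step left open, and you say so yourself. The deletion--contraction skeleton is sound (the identity $h_i(G)=h_i(G-e)+h_{i-1}(G/e)$ is correct, the tree base case is fine, and the monotonicity lemma is easy once you fix the single-nontrivial-block representative, whose root multiset indeed changes by incrementing one $a_j$ as $m$ grows). But the whole induction is funnelled through the ``extremal-edge lemma'': every connected $G$ with $n$ vertices and $m\ge n$ edges must contain a non-bridge edge $e$ with $t_e\le t_{e_0}$, where $t_{e_0}$ is the triangle count of the distinguished edge of the near-clique-plus-bridges representative. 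This is precisely the hard content of the theorem, and your sketch for it — points (i)--(iii) plus averaging — is not yet an argument. In particular (iii) cannot carry the weight you put on it: an edge can lie in arbitrarily many triangles without forcing a large clique (a book $K_2$ joined to $k$ independent vertices has an edge in $k$ triangles with clique number $3$), so bounding $\omega(G)$ by $\omega(H)$ says nothing directly about $\min_e t_e$; and the averaging bound $3t_1(G)/b_2(G)$ must be compared to $\omega(H)-2$, which requires simultaneous control of the triangle count \emph{and} the number of non-bridge edges that you have not supplied, with equality cases (e.g.\ $G=H$) leaving no slack.

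Two smaller cautions. First, your reduction ``it suffices to treat one convenient representative'' leans on the stated Proposition that all members of $L(n,m)\cup J(n,m)$ are co-chromatic; your argument really only needs, and only uses, the explicit factored polynomial of the single-block-plus-bridges representative, so you should phrase the induction entirely in terms of that representative and invoke the Proposition only at the very end to transfer minimality to all of $L(n,m)\cup J(n,m)$ — with the representative chosen this way, the bookkeeping $P_{H-e_0}=P_{H_{n,m-1}}$ and $P_{H/e_0}=P_{H_{n-1,m-1-t_{e_0}}}$ does check out, but it fails for a general $J$-type $H$ with several nontrivial blocks, so the choice is not optional. Second, the alternative you mention via broken circuits is attractive but the injection is not sketched either. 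In short: the architecture is reasonable and would prove the stronger coefficient-wise minimality that \cite{rodriguez1997chromatic} establishes, but without a proof of the extremal-edge lemma the proposal has a genuine gap exactly where the theorem is difficult.
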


In Section \ref{extrirr}, we describe extremal graphs in our data set using the following definition based on the chromatic polynomial poset.

\begin{definition} \label{chromextremedef}
A chromatically minimal (resp., chromatically maximal) graph with $n$ vertices and $m$ edges is one which is a minimal (resp. maximal) element of the poset $G(n,m)$.
\end{definition}

\subsection{Irregularity measures for graphs} \label{irreg_bkground}
A graph $G$ is $k$-regular if $\deg(v) = k$ for each vertex $v \in V(G)$. Numerous invariants have been proposed to describe how far a given graph is from being regular, i.e., how irregular it is. A generally accepted criterion for an irregularity measure is that it should return a value of zero if and only if a graph is regular. The difference between the maximum and minimum vertex degrees in a graph is one example of an irregularity measure. Two other commonly cited measures are spectral irregularity and variance irregularity of a graph.

\begin{definition}[Spectral irregularity \cite{von1957spektren}] \label{spectralirrdef}
Let $\la_1(G)$ be the largest eigenvalue of the adjacency matrix of graph $G$ and let $\overline{d}(G)$ be the average degree of the vertices of $G$. The spectral irregularity measure of $G$ is $\epsilon(G) = \la_1(G)-\overline{d}(G)$, which is zero if $G$ is regular and positive otherwise.
\end{definition}

\begin{definition}[Variance irregularity \cite{bell1992note}] \label{varirrdef}
Suppose $G$ has order $n$ and the vertices are labeled, with $d_i = deg(v_i)$. The variance irregularity measure of $G$ is $$\sigma(G) = \dfrac{1}{n} \displaystyle\sum_{i=1}^n d_i^2 - \dfrac{1}{n^2} \left(\displaystyle \sum_{i=1}^n d_i \right)^2.$$
\end{definition}

For other proposed irregularity measures see \cite{albertson1997irregularity, nikiforov2006eigenvalues, abdo2014total} or a survey \cite{oliveira2013measures, dimitrov2014comparing} that also talks about  their comparative strengths.

The following classes of graphs are the most irregular under spectral and variance irregularity.

\begin{definition}[\cite{ahlswede1978graphs, oliveira2013measures}] \label{quasicomplete}
Let $G$ be a connected graph with $n$ vertices and $m$ edges. Let $d, t$ be the unique integer values with $2 \le d$ and $0 \le t < d$ such that $m = \binom{d}{2}+t$. Then $G$ is the quasi-complete graph $QC(n, m)$ if it is the unique graph such that:
\begin{enumerate}
    \item $G$ has a clique $C$ of size $n-1$;
    \item The single vertex $v'$ not included in $C$ is adjacent to exactly $t$ of the other vertices.
\end{enumerate}
\end{definition}

\begin{definition}[\cite{ahlswede1978graphs, oliveira2013measures}] \label{quasistar}
Let $G$ be a connected graph with $n$ vertices and $m$ edges. Let $d, t$ be the unique integer values with $2 \le d$ and $0 \le t < d$ such that $m = \binom{n}{2}-\binom{d}{2}-t$. Then $G$ is the quasi-star graph $QS(n, m)$ if it is the unique graph that:
\begin{enumerate}
    \item contains a  set $S$ of $n-d-1$ vertices, each of which is adjacent to all other vertices of $G$;
    \item contains one vertex adjacent to exactly $n-t$ vertices of $S$;
    \item the only edges of $G$ are the ones already specified.
\end{enumerate}
\end{definition}

\begin{prop}[Proposition 1, \cite{bell1992note}]
Among all graphs in $G(n, m)$, the maximum spectral irregularity is achieved by quasi-complete $QC(n,m)$, and the maximum variance irregularity is achieved by $QC(n,m)$ for $m > \frac{1}{2}\binom{n}{2} + \frac{n}{2}$ and by quasi-star fraphs $QS(n,m)$ such that $m < \frac{1}{2}\binom{n}{2} - \frac{n}{2}$.
\end{prop}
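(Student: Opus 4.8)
The plan is to reduce each maximization to an extremal problem that no longer sees the data already fixed by $n$ and $m$. Every $G\in G(n,m)$ has average degree $\overline{d}(G)=2m/n$ and degree sum $\sum_{v\in V}\deg(v)=2m$, so maximizing the spectral irregularity $\epsilon(G)=\la_1(G)-\overline{d}(G)$ over $G(n,m)$ is the same as maximizing the adjacency spectral radius $\la_1(G)$, and maximizing the variance irregularity $\sigma(G)=\frac1n\sum_{v\in V}\deg(v)^2-\frac{4m^2}{n^2}$ is the same as maximizing $S(G):=\sum_{v\in V}\deg(v)^2$, equivalently the number of cherries $\sum_{v\in V}\binom{\deg(v)}{2}=\tfrac12\bigl(S(G)-2m\bigr)$.

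For the spectral half I would run an edge-rotation argument governed by the Perron–Frobenius eigenvector. Take $G$ extremal with unit Perron eigenvector $x>0$, so $\la_1(G)=x^{\top}A(G)x=2\sum_{\{i,j\}\in E}x_ix_j$. If $G$ is not the quasi-complete graph, order the vertices by decreasing Perron weight and locate a ``misplaced'' edge: an edge $\{i,j\}$ together with a non-edge $\{k,\ell\}$ whose endpoints both carry strictly larger weight; replacing $\{i,j\}$ by $\{k,\ell\}$ keeps $G\in G(n,m)$ and strictly increases $2\sum x_ix_j$, hence $\la_1$, contradicting extremality. Iterating pushes the optimizer into the ``clique-plus-cone'' (threshold) shape, and a last comparison isolates $QC(n,m)$ among such graphs. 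This is the Brualdi–Hoffman problem for the spectral radius with a prescribed number of edges, settled by Rowlinson; the delicate point — and, for this half, the main obstacle — is not the crude switching inequality but this final comparison, which separates $QC(n,m)$ from the other threshold graphs of the same order and size and seems to require a perturbation analysis of the characteristic polynomial rather than a one-line bound.

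For the variance half I would first reduce to threshold graphs by a degree-transfer move. If $G\in G(n,m)$ has vertices $u,v$ with $\deg(u)\ge\deg(v)$ and some vertex $w\in N(v)\setminus\bigl(N(u)\cup\{u\}\bigr)$, then deleting the edge $\{v,w\}$ and inserting $\{u,w\}$ keeps $n$ and $m$ fixed while changing $S$ by $2\bigl(\deg(u)-\deg(v)\bigr)+2>0$; since $S$ is bounded, applying this move as long as such a configuration exists terminates at a graph all of whose neighborhoods are nested, i.e.\ a threshold graph. As $S=\sum_{v\in V}\deg(v)^2$ is strictly Schur-convex, its maximum over threshold graphs in $G(n,m)$ is attained at one of the two ``extreme'' threshold graphs, the quasi-complete $QC(n,m)$ or the quasi-star $QS(n,m)$; this identification, together with the determination of which of the two wins as a function of $m$, is the Ahlswede–Katona theorem \cite{ahlswede1978graphs}. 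Concretely, one compares $S(QC(n,m))$ with $S(QS(n,m))$ using their explicit degree sequences: for fixed $n$ each is an explicit quadratic in $m$, and the comparison shows $QC$ wins for $m$ above a threshold near $\tfrac12\binom{n}{2}$ and $QS$ below it. The stated inequalities $m>\tfrac12\binom{n}{2}+\tfrac n2$ and $m<\tfrac12\binom{n}{2}-\tfrac n2$ are exactly the ranges in which this comparison stays strict once the integer rounding hidden in the quasi-complete and quasi-star parameters is absorbed, and producing that $\pm n/2$ buffer cleanly is the main bookkeeping obstacle; the proposition as a whole is Bell's \cite{bell1992note}.
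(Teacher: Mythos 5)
This proposition is imported verbatim from Bell \cite{bell1992note} --- the paper gives no proof of its own --- and your sketch follows essentially the same route the cited literature takes: since $n$ and $m$ fix $\overline{d}(G)$ and $\sum_v \deg(v)$, maximizing $\epsilon$ reduces to maximizing $\lambda_1$ (the Brualdi--Hoffman problem, settled by Rowlinson) and maximizing $\sigma$ reduces to maximizing $\sum_v \deg(v)^2$ (Ahlswede--Katona \cite{ahlswede1978graphs}), with the genuinely hard identifications deferred to those results, which is appropriate for a statement the paper itself only cites. Two small cautions: Schur-convexity alone cannot single out $QC(n,m)$ and $QS(n,m)$ among threshold graphs, since threshold degree sequences with fixed $n$ and $m$ are pairwise incomparable in the dominance order, so that step really is carried entirely by the Ahlswede--Katona comparison you cite; and both your edge-switching and degree-transfer moves can exit the class of connected graphs (which is how the paper defines $G(n,m)$), so one should either observe that isolated vertices change neither $\lambda_1$ nor $\sum_v \deg(v)^2$ or restrict attention to the edge ranges in which the extremal graphs are connected.
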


On the other hand, we have graphs which are as close to regular as possible, such as Turan graphs.

\begin{definition} \label{turan}
A Turan graph $T(n, r)$ is the graph formed by partitioning a set of $n$ vertices into $r$ groups, with sizes as equal as possible, and connecting two vertices by an edge if and only if the vertices belong to separate groups. If we let $t, p \in \mathbb{Z}$ such that $n = tr+p$, each vertex of $T(n, r)$ has degree either $n-t$ or $n-t-1$.
\end{definition}

 Turan graphs are a family of extremal graphs whose degree sequences are constant or nearly constant   and they are characterized by the following theorem. 
 
 \begin{theorem}[Turan's Theorem, \cite{turan1941external}] \label{turantheorem}
Turan graph on n vertices with r groups $T(n, r)$ has the maximal number of edges among all order $n$ graphs that contain no $(r+1)$-clique.
\end{theorem}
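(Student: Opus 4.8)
The plan is to prove this extremal statement by Zykov's symmetrization argument, which in fact yields the stronger conclusion that $T(n,r)$ is the \emph{unique} maximizer. First I would fix $n$ and $r$ (the case $n \le r$ being trivial, since then $T(n,r) = K_n$ and no $(r+1)$-clique fits in $n$ vertices at all), and let $G$ be any $K_{r+1}$-free graph on $n$ vertices with the maximum possible number of edges. The goal is to show $G \cong T(n,r)$.

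The heart of the argument is the structural claim that the relation ``$u = v$ or $\{u,v\} \notin E(G)$'' is an equivalence relation on $V(G)$; equivalently, non-adjacency is transitive. I would prove this by contradiction: suppose there exist vertices with $u \not\sim w$, $v \not\sim w$, but $u \sim v$, and split into cases by comparing degrees. If $\deg(w) < \deg(u)$, delete $w$ and add a \emph{clone} of $u$, i.e.\ a new vertex with the same neighborhood as $u$ and not adjacent to $u$; the result is still $K_{r+1}$-free (any clique contains at most one of the two copies of $u$, since they are non-adjacent, hence corresponds to a clique of $G$) and has $\deg(u) - \deg(w) > 0$ more edges, contradicting maximality. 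The case $\deg(w) < \deg(v)$ is symmetric. In the remaining case $\deg(w) \ge \deg(u)$ and $\deg(w) \ge \deg(v)$, delete \emph{both} $u$ and $v$ and add two clones of $w$ (noting $u,v \notin N_G(w)$, so each clone genuinely gains $\deg(w)$ edges); a short count shows the edge total changes by $2\deg(w) - \bigl(\deg(u)+\deg(v)-1\bigr) \ge 1 > 0$, where the $-1$ corrects for the edge $\{u,v\}$ counted in both $\deg(u)$ and $\deg(v)$, and again the new graph is $K_{r+1}$-free, contradicting maximality.

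Granting the claim, the equivalence classes are independent sets that are pairwise completely joined, so $G$ is a complete multipartite graph; being $K_{r+1}$-free it has at most $r$ parts, and since splitting one part into two only adds edges, a maximizer has exactly $r$ nonempty parts (using $n \ge r$). Finally, writing $|E(G)| = \binom{n}{2} - \sum_{i=1}^r \binom{n_i}{2}$ with $\sum_i n_i = n$, a smoothing argument finishes the proof: if some $n_i \ge n_j + 2$, moving one vertex from part $i$ to part $j$ strictly decreases $\binom{n_i}{2} + \binom{n_j}{2}$, hence strictly increases $|E(G)|$, so at a maximum the part sizes differ by at most one, which is precisely the definition of $T(n,r)$.

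The step I expect to be the main obstacle is the bookkeeping in the symmetrization: in each case one must verify that cloning never creates a $K_{r+1}$ and that the edge count strictly increases, with the subtlety that deleting two adjacent vertices double-counts their common edge. A cleaner-sounding but ultimately more awkward alternative is induction on $n$: remove a maximum clique (of size $\le r$), bound the edges incident to it, and apply the inductive hypothesis to the rest; this recovers the extremal \emph{value} $|E(T(n,r))|$ but handles the uniqueness only with extra effort, so I would stick with symmetrization.
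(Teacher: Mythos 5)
The paper does not prove this statement at all: Theorem \ref{turantheorem} is quoted as a classical result with a citation to Tur\'an's 1941 paper, so there is no in-paper argument to compare against. Your proposal is a correct, self-contained proof by Zykov symmetrization, and the details check out: in the first case the clone of $u$ replaces $w$ for a net gain of $\deg(u)-\deg(w)>0$ edges, in the remaining case the two clones of $w$ give a net change of $2\deg(w)-(\deg(u)+\deg(v)-1)\ge 1$ (the $-1$ correctly accounting for the edge $\{u,v\}$), and in both cases cliques can contain at most one copy of a cloned vertex, so $K_{r+1}$-freeness is preserved; transitivity of non-adjacency then forces a complete multipartite structure, and the smoothing step (moving a vertex from a part of size $n_i\ge n_j+2$ changes $\binom{n_i}{2}+\binom{n_j}{2}$ by $n_j-n_i+1\le -1$) pins down the balanced partition, i.e.\ $T(n,r)$. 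Compared with Tur\'an's original induction on $n$ (delete a maximum clique $K_r$ and bound the edges meeting it), your route costs a slightly fussier case analysis but buys uniqueness of the extremal graph essentially for free, which is stronger than the statement as quoted in the paper; the inductive route recovers the extremal edge count more quickly but, as you note, needs extra work to characterize the maximizers. Only a cosmetic caveat: handle the degenerate case $n<r$ exactly as you do (some parts of $T(n,r)$ are empty and $T(n,r)=K_n$), so that ``exactly $r$ nonempty parts'' is asserted only when $n\ge r$.
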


\subsection{Threshold graphs and compression} \label{compress}

Compression of a graph is an operation first introduced in \cite{kelmans1981graphs} and known by a variety of names, including the Kelmans transformation \cite{csikvari2011applications} and swing surgery \cite{satyanarayana1992reliability}. See the introduction of \cite{kahl2022extremal} for details.

\begin{definition}
The neighborhood of vertex $v$ in graph $G$, denoted $N_G(v)$, is the set of all vertices which are connected to $v$ by an edge. We say that vertex $v$ dominates vertex $u$ if $N_G(u) \subset N_G[v]$, where $N_G[v] = N_G(v) \cup \{v\}$.
\end{definition}

\begin{definition}\label{tresh}
A graph $G$ is threshold if for all vertices $u, v \in V(G)$, either $u$ dominates $v$ or $v$ dominates $u$.
\end{definition}
For other characterizations of threshold graphs, see \cite{csikvari2011applications, keough2016graphs}.

\begin{definition}
Let $G = (V, E)$ be a graph and let $u, v \in V$. The compression of $G$ from vertex $u$ to vertex $v$ is an operation defined as follows: for each vertex $x \in N_G(u) - N_G(v) - \{v\}$ we delete all edges of the form $\{x, u\}$ from $G$ and replace them with corresponding edges of the form $\{x, v\}$. The new graph produced by the compression of $G$ from $u$ to vertex $v$ is denoted $G_{u \rightarrow v}$.
\end{definition}

Compression preserves the number of vertices and edges of a graph, and the result is independent of the order of in which vertices $u$ and $v$ are used, up to isomorphism (i.e., $G_{u \rightarrow v}$ and $G_{v \rightarrow u}$ are isomorphic graphs). Compression of a graph monotonically increases a number of graph invariants, such as the spectral radius and the number of independent sets of order $k$, while decreasing other invariants such as the number of spanning trees and the vertex connectivity \cite{csikvari2011applications, kahl2021graph, kahl2022extremal}. 

Any connected graph can be transformed into a connected threshold graph by repeated applications of graph compression \cite{bogdanowicz1985spanning, satyanarayana1992reliability}. Thus, threshold graphs are often extremal with respect to these types of invariants.

\section{Methods and data}

\subsection{Principal component analysis} \label{PCA}

Principal component analysis (PCA) is a common technique for exploring multivariate data. The algorithm finds a set of directions within the feature space which explain most of the variance in the data. The number of these directions is often far smaller than the number of original features, and so PCA can be used as a method of dimensionality reduction when visualizing and interpreting a high-dimensional data set.

Let $X \in \R^{n \times p}$ be a matrix representing a data set of $n$ individuals, each with $p$ features. PCA takes the sample covariance matrix $S$ for the data set and finds an orthonormal eigensystem $\{(\la_j, a_j)\}_{j=1}^p$ for $S$ with eigenvalues $\la_1 \ge \la_2 \ge \ldots \ge \la_{p}$. The eigenvector $a_1$ is the unit vector in $\R^p$ such that the entries of $Xa_1$ have maximal variance (up to an overall sign change for $a_1$). The second eigenvector $a_2$ maximizes variance subject to the constraint that $a_2$ is orthogonal to $a_1$, and all successive vectors $a_j$ maximize variance among directions orthogonal to $\{a_1, \ldots, a_{j-1}\}$.

\begin{definition}\cite{jolliffe2016principal}
Given a data set $X \subset \R^{n \times p}$ and PCA eigensystem $\{( \la_j, a_j)\}_{j=1}^p$, the $j$th principal component of $X$ is the product $Xa_j$. For the $j$th principal component of $X$, the $p$ elements of the vector $a_j$ are called the PC loadings for the $j$th principal component, and the $n$ elements of $Xa_j$ are called the PC scores.
\end{definition}

Note that if $a_j$ is an eigenvector of $S$ with eigenvalue $\la_j$, then so is its opposite $-a_j$. Thus the signs of PC loadings and PC scores are arbitrary and may be reversed depending on the implementation of PCA in a given instance.

\begin{definition}\cite{jolliffe2016principal}\label{expvardef}
The explained variance associated with the $j$th principal component of the data set $X$ is the eigenvalue $\la_j$, and the normalized explained variance $\overline{\la_j}$ is defined by $\overline{\la_j} = \dfrac{1}{\sum_{i=1}^p \la_i} \la_j$. 

The proportion of variance explained by the first $j$ principal components of $X$ will be denoted $S_j = \sum_i^j \overline{\la_i}$. 
\end{definition}

The eigenvector $a_1$ is a unit vector that determines a line in the feature space $\R^p$. This line indicates the axis in $\R^p$ along which the data set represented by $X$ has the greatest variation. Similarly, $a_2$ gives the direction orthogonal to $a_1$ for which the data set has the second-greatest variation, and so on for $ 2< j \le p$.

Suppose that $x \in \R^p$ is a column of $X$ representing one individual in the data set. The PC score for the $j$th principal component represents the length of the projection of $x$ onto the line corresponding to eigenvector $a_j$. The implementation of PCA in the Python {\tt scikit-learn} package \cite{scikit-learn} centers the data by moving the mean to the origin. Thus a PC score of zero in the $j$th component would indicate that $x$ lies at the center of the distribution with respect to the $j$th direction of variation. A positive or negative score indicates that $x$ lies further from the center on either side along this direction.

\subsection{Ball Mapper}

\begin{figure}{}
    \begin{subfigure}[]{0.23\textwidth}
    \includegraphics[height=0.8\textwidth]{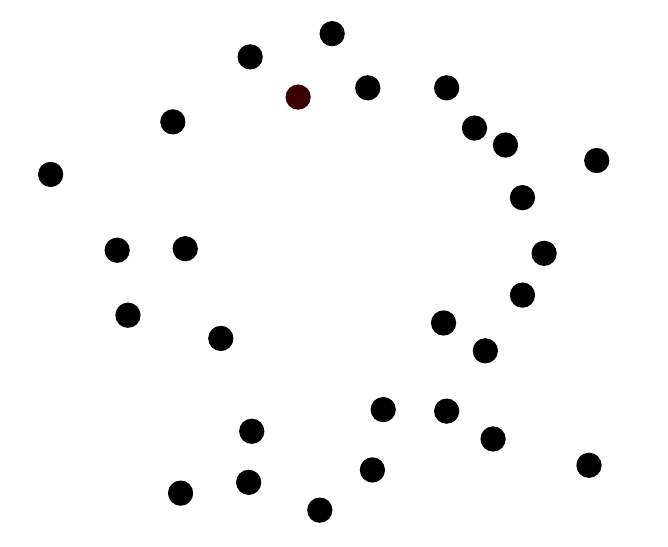}
         \caption{}
     \end{subfigure}\hspace{0.2cm}
     \begin{subfigure}[]{0.23\textwidth}
     \includegraphics[height=0.9\textwidth]{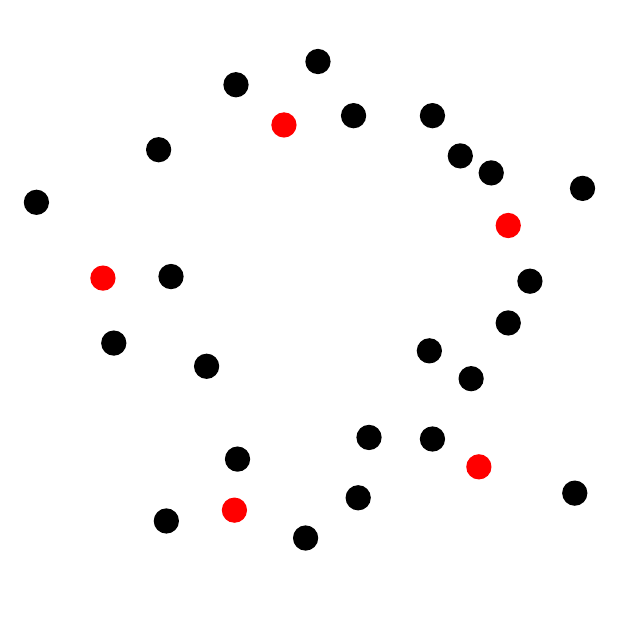}
         \caption{}
     \end{subfigure}\hspace{0.1cm}
       \begin{subfigure}[]{0.23\textwidth}   \includegraphics[height=0.9\textwidth]{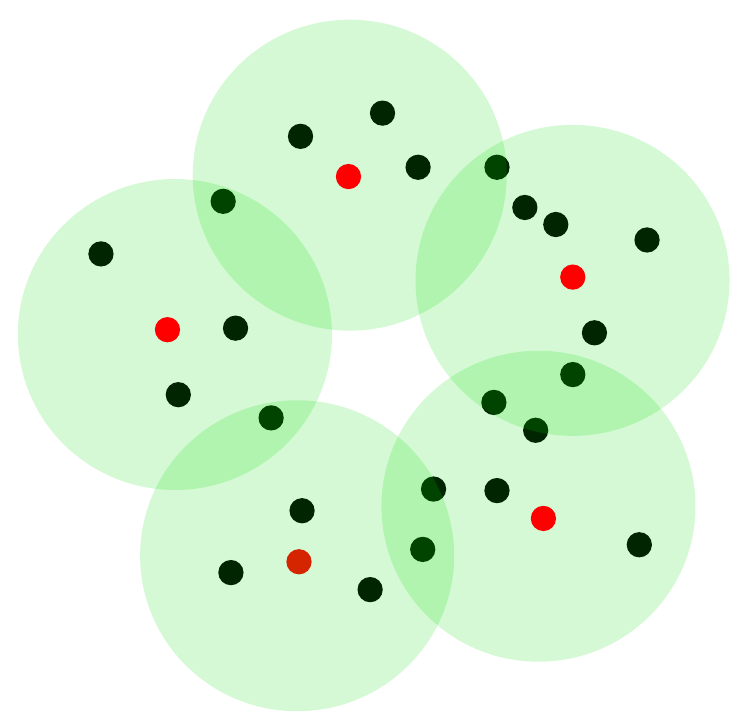}
         \caption{}
     \end{subfigure}
     \begin{subfigure}[]{0.23\textwidth}
     \includegraphics[height=0.85\textwidth]{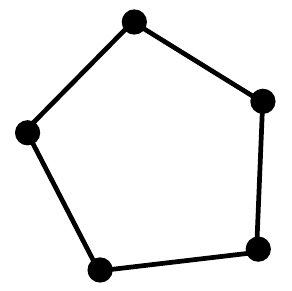}
         \caption{}
     \end{subfigure}

    \caption{Ball Mapper construction: Illustration. The input point cloud is shown in (A), $\epsilon$--net (B), Ball Mapper cover (C), corresponding Ball Mapper graph (D). }
    \label{fig:bm_example}
\end{figure}

Ball Mapper \cite{dlotko2019ball, ball_mapper} is a recent Mapper-type algorithm for visualizing and analyzing the local and global structure of a data set. Like the widely-used Mapper algorithm \cite{singh2007topological}, Ball Mapper takes a data set in $\R^n$ as an input and returns a graph which describes the proximity of points within the data. While Mapper is highly dependent on a choice of filter function for the data set and a choice of clustering method, Ball Mapper's approach to obtaining the cover is simpler and requires only one input parameter, a radius $\epsilon$.

Given a set of points $X \subset \R^d$ and $\epsilon > 0$, see Figure \ref{fig:bm_example}(A),  Ball Mapper finds a collection $C \subset X$, called an $\epsilon$--net Figure \ref{fig:bm_example}(B), such that no point in $X$ is located further than distance $\epsilon$ from an element of $C$. This collection determines an overlapping cover $B(X, \epsilon) = \{B_1, B_2, \ldots, B_i, \ldots\}$: a set of $d$-dimensional balls with radius $\epsilon$ that contain all of $X$, Figure \ref{fig:bm_example}(C). The algorithm then constructs the nerve of $B(X, \epsilon)$, called Ball Mapper graph (BMGraph) Figure \ref{fig:bm_example}(D). In a Ball Mapper graph, vertices  $v_i$ correspond to the balls $B_i$ in the cover, and two vertices $v_i, v_j$ are connected by an edge if and only if $B_i$ and $B_j$ contain shared points in $X$.

The Ball Mapper parameter $\epsilon$ determines the scale of the features highlighted in the output. By varying $\epsilon$, we create a linear sequence of images which highlight structures at successive scales, inspired by the filtration in persistent homology (but in this case we do not have the stability result). Additionally, we can choose to filter our data according to a range of values of a given criterion and obtain an output that can be displayed as a sequence of graphs,  see Figure \ref{fig:ch_vertices_pers}.

Results in this paper are obtained using the R implementation of Ball Mapper \cite{dlotko2019ballmapper}. This version of Ball Mapper supports additional visualization features; e.g. the disks corresponding to vertices of the Ball Mapper graph are scaled according to the number of points they contain. Thus larger disks in the visualization correspond to larger clusters of points within radius $\epsilon$ of each other. Another useful feature is coloring the Ball Mapper graph using values of some function  $f:X \to \R$ defined on the input data. The color of each Ball Mapper disc is the average value of the functions $f$ on the points of $X$ in that cluster. For convenience, we include the colored vertical bar on the right of each image which describes a uniform scale illustrating the distribution of colors from the minimum to maximum values of $f$ within the data set.

\subsection{Graph data} \label{DataBkgrd}

The focus of this paper is on the chromatic polynomials and our data contains chromatic polynomials for all simple connected graphs up to ten vertices for a total of 11,716,571 graphs. Our database contains many additional properties including numbers of vertices and edges, spectral irregularity, variance irregularity, numbers of subgraphs including cycles and $K_4$s, maximal and minimal vertex degrees, and identifications of all graphs which belong to classes including trees, Turan graphs, or bipartite graphs.

Chromatic polynomials are computed  using the Sage \texttt{GraphGenerators} class implemented in Python 3 \cite{sage, conda_forge}. The point cloud consists of the set of all chromatic polynomial coefficients obtained in the following way. Given graph $G$ with chromatic polynomial $P_G(\la) = c_n \la^n + c_{n-1} \la^{n-1} + \ldots + c_1 \la$, we represent $P_G(\la)$ by its coefficient vector $Q(G) = (|c_{n}|, |c_{n-1}|, \ldots, |c_1|, 0, \ldots, 0)$. For simplicity and computational efficiency, we take absolute values of each coefficient, since the sign is given by $(-1)^{k}$ for each coefficient $c_{n-k}$ and thus remains constant within each feature of our data set. Our data set consists of coefficient vectors with at most 10 non-zero entries, so we standardize length by adding zero entries for graphs with less than $10$ vertices. Table \ref{examplechromvectors} contains  sample vectors for the triangle $C_3$, the complete graph $K_5$, and the cycle $C_{10}$ with 10 edges. 

\begin{table}
\renewcommand{\arraystretch}{1.4}
\begin{center}
\resizebox{\columnwidth}{!}{   \begin{tabular}{ | p{0.75cm}|p{7cm}|p{6.2cm}| }
    \hline
     $\mathbf{G}$ & Chromatic polynomial $\mathbf{P_G(\la)}$ & Vector $\mathbf{Q}(G)$\\ \hline
    $C_3$ & $\la^3 - 3\la^2 + 2\la$ & $(1, 3, 2, 0, 0, 0, 0, 0, 0, 0)$ \\ \hline
    $K_5$ & $\la^5 - 10\la^4 + 35\la^3 - 50\la^2 + 24\la$ & $(1, 10, 35, 50, 24, 0, 0, 0, 0, 0)$ \\ \hline
    $C_{10}$ & $\la^{10} - 10\la^9 + 45\la^8 - 120\la^7 + 210\la^6 - 252\la^5 \newline + 210\la^4 - 120\la^3 + 45\la^2 - 9\la$ & $(1, 10, 45, 120, 210, 252, 210, 120, 45, 9)$ \\ \hline
  \end{tabular}}
  \caption{Chromatic point cloud examples: chromatic coefficient vectors (absolute values) for the triangle, complete graph on 5 vertices, the cycle graph with all ten non-zero entries. }
  \label{examplechromvectors}
  \end{center}
\end{table}

\begin{figure}[ht!]
     \begin{subfigure}[]{0.45\textwidth}
         \includegraphics[width=0.8\textwidth]{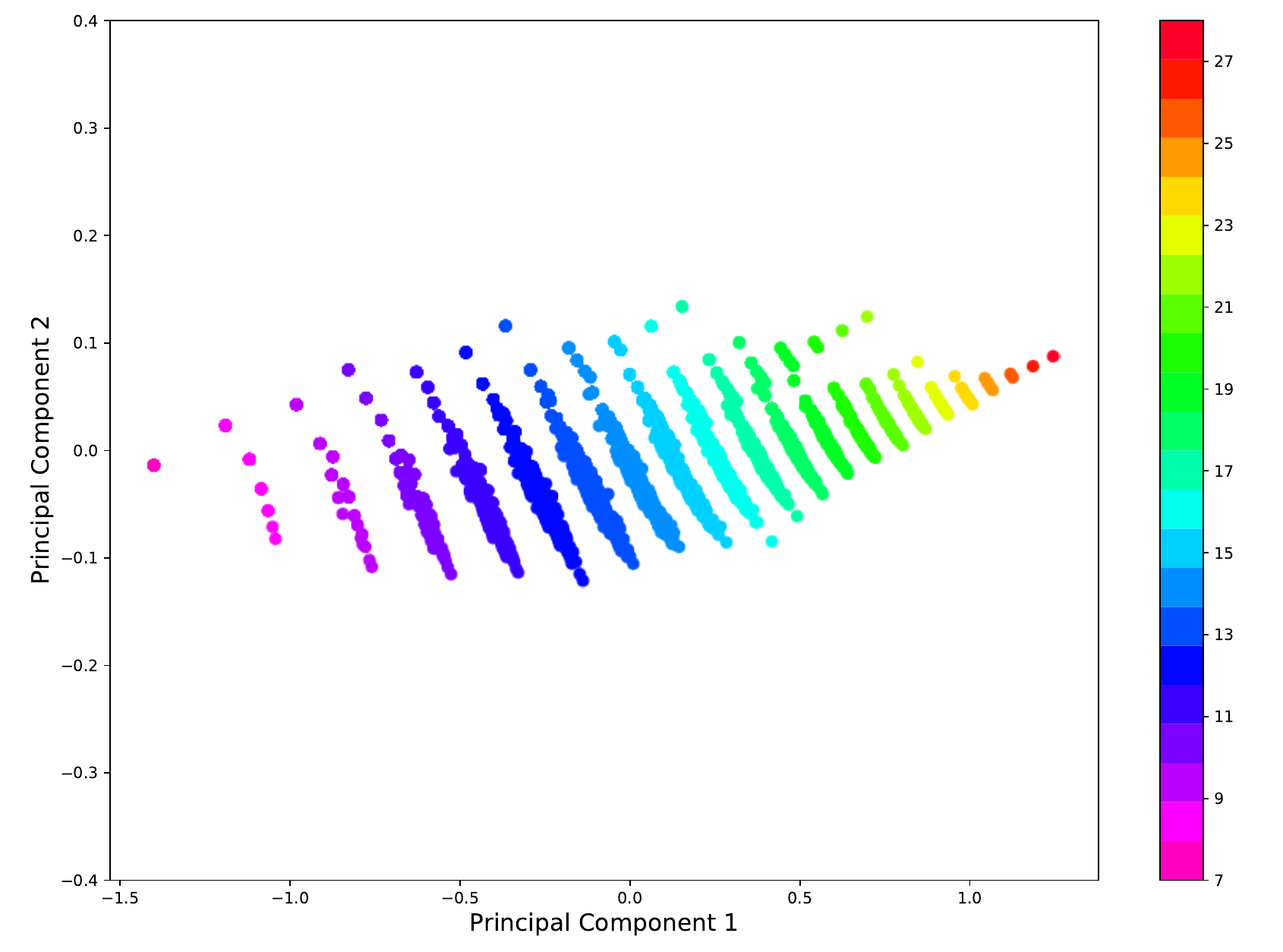}
         \caption{}
     \end{subfigure}
     \begin{subfigure}[]{0.45\textwidth}
         \includegraphics[width=0.8\textwidth]{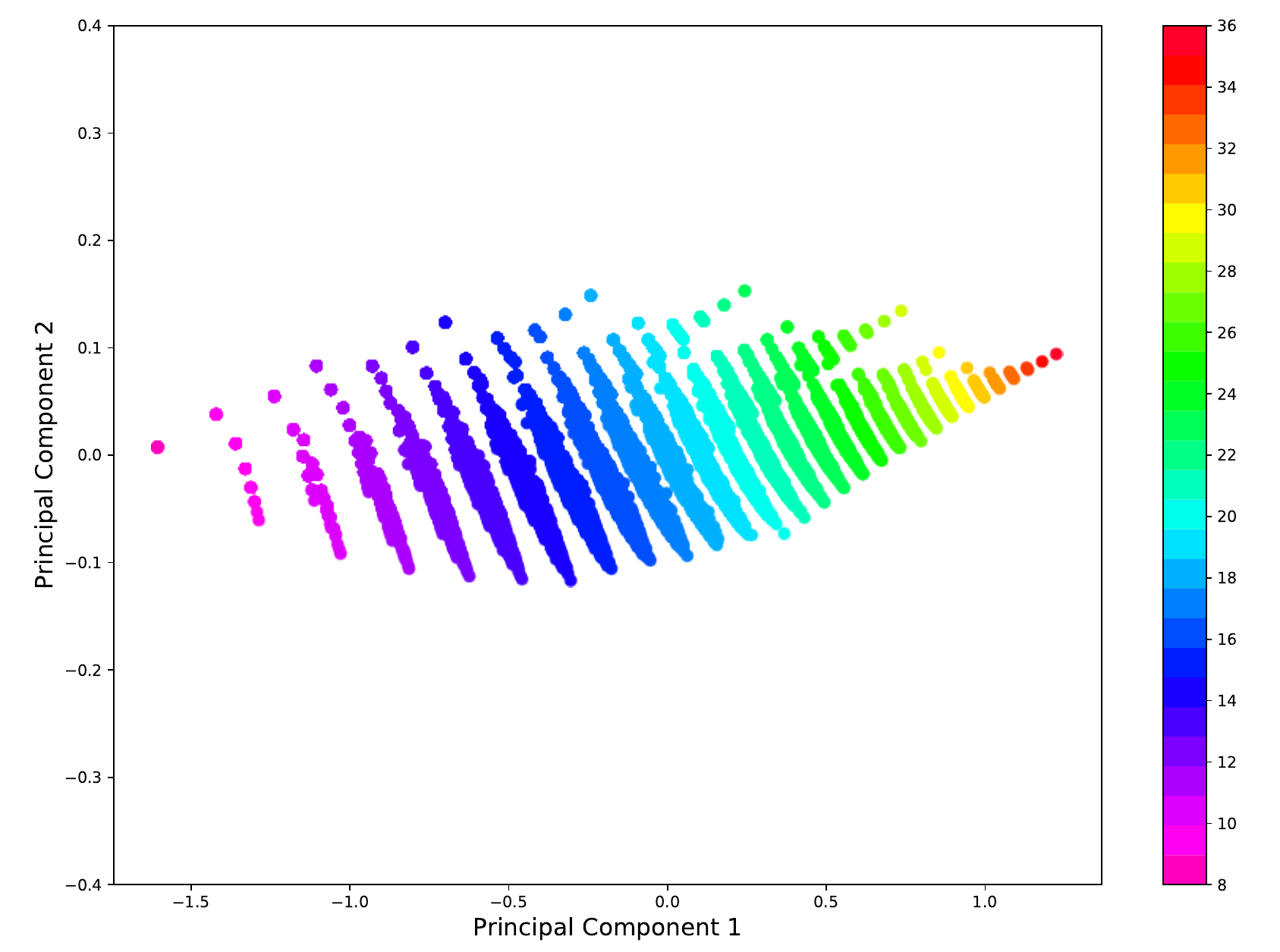}
         \caption{}
     \end{subfigure}
     \caption{A PCA projection of the chromatic polynomial data to 2 dimensions determined by the first two principal components and colored by number of edges for graphs with eight (A) and nine vertices (B).}
\label{fig:chPCAedges}
\end{figure}

In the sections that follow, we visualize data sets corresponding to graphs with a fixed number of vertices. Within each data set, the varying sizes of chromatic coefficients make it difficult to see important features during visualization. We apply a log transformation $f(x) = \ln(1+x)$ to the data set derived from graphs with $n$ vertices and then separately normalize each feature to the range $[0,1]$ with min-max scaling over this particular data set.

\begin{figure}[ht!]
         \includegraphics[width=0.4\textwidth]{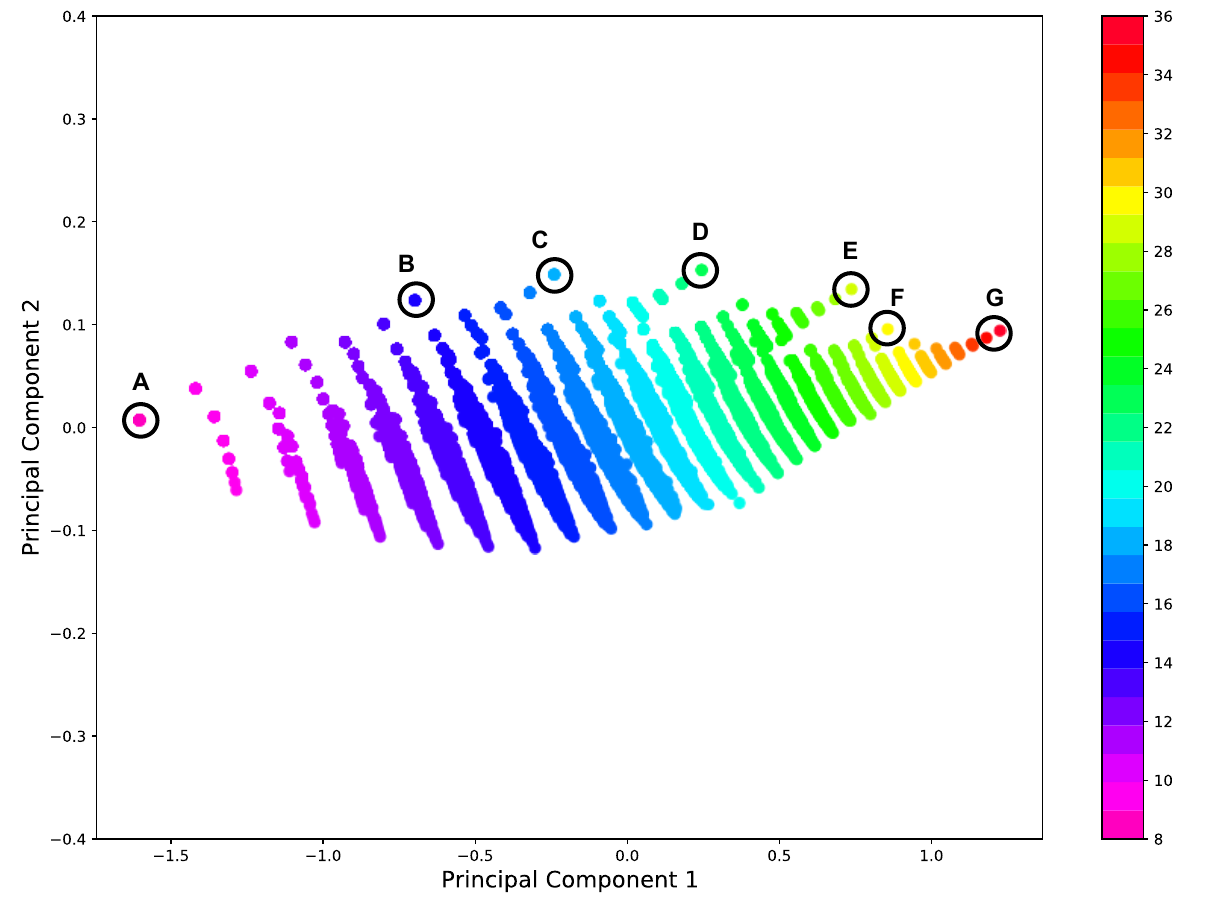}
     \caption{The PCA projection from Figure \ref{fig:chPCAedges}(B) with clusters containing some well known graphs circled and labeled. In addition to multiple non-isomorphic graphs the following clusters contain A: all 9-vertex trees, B: complete graph $K_5$ with 4 pendant edges, C: complete graph $K_6$ with 3 pendant edges, D: complete graph $K_7$ with 2 pendant edges, E: complete graph $K_8$ with 1 pendant edge, F: complete graph $K_8$ with triangle attached along 1 edge, G: complete graph  $K_9$, and $K_9$ with 1 edge removed. 
     }
\label{fig:chPCAedgesIdentify}
\end{figure}

\section{Chromatic polynomial data: structure and dimensionality}

The main results of this paper rely on introducing big data analysis to graph theory in a novel way inspired by applications to knot theory \cite{pawel2021knot, levitt2019big}. In these approaches a graph is represented by a unique vector, consisting of coefficients of the chromatic polynomial.  In this paper we focus on filtered PCA and Ball Mapper, but this graph representation lends itself to any of the big data techniques, including machine learning and other artificial intelligence tools.

Input data consists of chromatic polynomials for graphs up to ten crossings as described in Section \ref{DataBkgrd}. This point cloud naturally embeds in $\mathbb{R}^9$ and we are using Euclidean distance as our metric for this initial analysis.

Note that Theorem \ref{Farrell} provides an interpretation of the first few chromatic coefficients in terms of number of vertices and edges, as well as the number of special subgraphs such as cycles, complete graphs, and many others. Moreover, Bollob{\'a}s, Pebody and Riordan conjecture that almost every pair of independently chosen random graphs are distinguished by their Tutte and chromatic polynomials.

\begin{conj}[\cite{bollobas2000contraction}]
Let $G(n, 1/2)$ be the space of random graphs on a fixed set of n vertices in which each pair of vertices is joined independently with probability $1/2$. Let $T(G), p(G)$ denote the Tutte polynomial and chromatic polynomial of $G$ respectively. Then almost every graph $G \in G(n, 1/2)$ is such that $T(G')=T(G)$ implies $G'$ is isomorphic to $G$.
In addition, almost every graph $G \in G(n, 1/2)$ is such that $p(G')=p(G)$ implies $G'$ is isomorphic to $G$.
\end{conj}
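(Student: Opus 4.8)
Since the chromatic polynomial $P_G$ is a specialization of the Tutte polynomial $T_G$, any two graphs with $T_{G'}=T_G$ also satisfy $P_{G'}=P_G$; hence the Tutte assertion of the conjecture follows from the chromatic one, and I would prove only the latter. It is convenient to pass to the equivalent formulation: the assignment $G\mapsto P_G$ should be injective, up to isomorphism, on a $(1-o(1))$-fraction of $G(n,1/2)$. Calling a graph \emph{ambiguous} when some non-isomorphic graph has the same chromatic polynomial, the statement is that the $G(n,1/2)$-mass of ambiguous graphs tends to $0$. I would bound this mass by a second-moment / counting argument --- estimate the number of unordered pairs of non-isomorphic graphs on $n$ vertices with equal chromatic polynomial and show that the labellings of such graphs account for only $o(2^{\binom n2})$ of all labelled graphs --- using that almost all graphs in $G(n,1/2)$ are asymmetric, so that isomorphism-class counts and labelled counts differ only by the harmless factor $n!$.

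The key structural input is that an ambiguous partner $G'$ of $G$ is heavily constrained. By Proposition \ref{chrom_determine}, $G'$ has the same numbers of vertices, edges, triangles and blocks as $G$, the same girth, and the same number of shortest cycles; by Theorem \ref{Farrell} it also matches $G$ on the combinations of subgraph counts appearing in $c_{n-3},\dots,c_{n-5}$, and through the remaining coefficients on a further list of (increasingly global) invariants. For $G\in G(n,1/2)$ every bounded-size subgraph count is sharply concentrated, so I would first condition on the \emph{typical profile}: restrict to the set $\mathcal G^{*}$ of $\delta$-quasirandom graphs realising the typical values of all bounded-size subgraph densities, which carries $1-o(1)$ of the probability. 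Within $\mathcal G^{*}$ the low-order chromatic coefficients no longer separate anything, so the whole burden falls on the high-degree coefficients $c_{n-k}$ with $k$ proportional to $n$ --- equivalently, on the colour counts $P_G(r)$ for slowly growing $r$ --- which are large integers, and the heart of the plan is to show they carry enough ``independent bits'' to distinguish almost every pair in $\mathcal G^{*}$. Concretely I would attempt an edge-switching scheme: given twins $G,G'\in\mathcal G^{*}$, apply a random local switch to $G$ and show that, with high probability over $G$, it changes $P_G$ --- for instance it changes $P_G(r)$ at a well-chosen integer $r$ by an amount that is anti-concentrated --- then iterate to force any twin of $G$ to differ from $G$ in only $o(\binom n2)$ edges, and dispose of such near-identical pairs by a cruder direct count. (The Tutte form of the conjecture could instead be approached through the cycle-space code of $G$ and a weight-enumerator anti-concentration statement, but this appears no easier.)

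The main obstacle, which I expect to dominate every version of the plan, is that exact coincidences of chromatic polynomials genuinely occur: every tree on $n$ vertices has chromatic polynomial $\la(\la-1)^{n-1}$, and the families $L(n,m)\cup J(n,m)$ of Definition \ref{minfams} are pairwise indistinguishable. Thus no soft injectivity principle can suffice; the proof must be a quantitative estimate on the fibres of $G\mapsto P_G$, and that forces one to understand the joint distribution of the high-degree chromatic coefficients (or of the colour counts $P_G(r)$ for slowly growing $r$) of $G(n,1/2)$. These are given by large signed subgraph-type sums with essentially no local structure, and no current technique provides the anti-concentration one needs, which is exactly why the statement remains a conjecture. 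I would regard a proof of even the weaker ``pairwise'' form, $\Pr[P_{G_1}=P_{G_2}]=o(1)$ for independent $G_1,G_2\in G(n,1/2)$, as the first realistic milestone.
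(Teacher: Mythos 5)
This statement is not proved in the paper, and cannot be compared against a proof there: it is quoted verbatim as a conjecture of Bollob\'as, Pebody and Riordan \cite{bollobas2000contraction}, which remains open; the only supporting result the paper records is that the probability that two \emph{independently} chosen random graphs share a Tutte polynomial is $O(1/\log n)$ \cite{loebl2004triangles}. So the right assessment of your submission is not ``different route'' but ``no proof'', which you yourself concede.

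Within that framing, your preliminary reductions are sound: since $P_G$ is a specialization of $T_G$, chromatic uniqueness for almost every $G$ does imply Tutte uniqueness, and passing to labelled counts via asymptotic asymmetry of $G(n,1/2)$ is legitimate. But the heart of your plan is missing, not merely deferred. Restricting to a quasirandom class $\mathcal G^{*}$ kills the discriminating power of the bounded-size subgraph counts in Proposition \ref{chrom_determine} and Theorem \ref{Farrell}, exactly as you say, and you then need two unproved claims: (i) anti-concentration of $P_G(r)$ (or of the high-degree coefficients $c_{n-k}$, $k=\Theta(n)$) under a single local switch, uniformly over typical $G$; and (ii) that iterating switches reduces any co-chromatic pair to one differing in $o(\binom n2)$ edges, which does not follow from (i) and is itself a strong rigidity statement about the fibres of $G\mapsto P_G$. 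Neither is supported by current technique --- these quantities are huge signed sums over spanning subgraphs with no usable local structure --- and your own closing paragraph acknowledges this. Note also that even your proposed ``first milestone'', the pairwise statement $\Pr[P_{G_1}=P_{G_2}]=o(1)$, is already essentially the known partial result in the Tutte case, so it would not by itself advance the conjecture. In short: the proposal is a reasonable research outline, but it contains a genuine gap at precisely the step that makes the statement a conjecture rather than a theorem, and it should not be presented as a proof.
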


While the conjecture remains open, it has been shown that the probability that two independently chosen random graphs have the same Tutte polynomial is of order $O(1/\log n)$ \cite{loebl2004triangles}.

\subsection{Dimension of the chromatic data}\label{PCAdim}

Inspired by the big data approach to the Jones polynomial, a polynomial invariant for knots \cite{levitt2019big}, we apply Principal Component Analysis (PCA) to the chromatic data for graphs with increasing number of vertices, up to ten maximum. The \emph{dimension} of the Jones polynomial is defined to be the smallest value $\mathbf{d}$ for which the normalized explained variance of the first $\mathbf{d}$ principal components sums to more than 95\% across all considered orders \cite{levitt2019big}.

According to this definition and PCA computations, chromatic polynomial data is essentially one-dimensional, and this dimensionality is stable with respect to  increasing the order of a graphs, see Table \ref{PCAStable1}. This table presents a snapshot of PCA computations  for chromatic polynomial data for graphs of orders $n = 8, 9, 10$ and the first three principal components for each data set. In particular the last two rows in Table \ref{PCAStable1} contain  the normalized (NEV) and cumulative explained variance (CEV) for $n=8,9,10$ indicating that almost all variation is explained by the first  principal component. More precisely, the cumulative explained variance of PC1, $S_1$, is greater than $0.99$ for all orders up to 10.
In Section \ref{chromBM} we revisit this question since the structure of the Ball Mapper graph is linear, see comparison with the PCA in Figure \ref{fig:ch_9_zoom1_pca}.

\begin{table}[h]
\renewcommand{\arraystretch}{1}
\begin{center}
\resizebox{\columnwidth}{!}{  \begin{tabular}{|l||c|c|c||c|c|c||c|c|c|| }
    \hline
  &\multicolumn{3}{c||}{$n=8$} 
    & \multicolumn{3}{c||}{$n=9$}
    & \multicolumn{3}{c||}{$n=10$}\\ \hline
     & PC1 & PC2& PC3 &
  PC1 & PC2& PC3 &
PC1 & PC2& PC3 \\ \hline
 NEV     & 0.99188
           & 0.00792 & 0.00018 & 0.99178
   & 0.00806 & 0.00015 & 0.99189  & 0.00796 & 0.00015 \\ \hline
 CEV     & 0.99188
           & 0.99980 & 0.99998 & 0.99178
   & 0.99984 & 0.99999 & 0.99189  & 0.99985 & 0.99999 \\ \hline

  \end{tabular}}
  \caption{The normalized explained variance   $\overline{\la_j}(n)$ (NEV) and cumulative normalized explained variance $S_j(n)$ (CEV) for the first three  principal components of the chromatic polynomial data of graphs with orders  8, 9, and 10 respectively. }
  \label{PCAStable1}
  \end{center}
\end{table}

\subsection{Graph-theoretic interpretation of the principal components }

To get an insight into the graph-theoretic relevance of the principal components and the dimension reduction of our data from 10 to one (or two), we provide visualisations of the first two principal directions and further insights obtained using Ball Mapper; see Figure \ref{fig:ch_9_zoom1_pca}.

Focusing on the two most significant principal components on chromatic polynomials of graphs with fixed order, we visualize the structure of each data set by plotting each graph as a coordinate pair $(x_1,x_2)$ pair where $x_i$ represents the $i$th PC score of that graph, see Figure \ref{fig:chPCAedges}. This figure illustrates the persistent structure found for graphs with eight and nine vertices and the stability of the dimension reduction with respect to the number of vertices of a graph. The first principal component corresponds to the horizontal axis in Figure \ref{fig:chPCAedges}, which appears to be roughly correlated with an increase in the number of edges. Graphs with a fixed number of edges are found in clusters separated by thin regions of empty space in this projection. Figure \ref{fig:chPCAedgesIdentify} identifies the location of certain well-known graphs, such as trees and complete graphs, at the boundary of the projection. This boundary also contains maximum coefficient graphs; see Figure \ref{fig:chPCA9features1}.

The data set of chromatic polynomials for graphs of 9 vertices has 8 dimensions, corresponding to the eight coefficients which are not trivially equal to 0 or 1. The loadings for PC1 and PC2 in Table \ref{PCAloadingtable} are the entries of the eigenvectors that give the directions of greatest variation in this eight-dimensional data set. We observe that all loadings for PC1 are positive, so that movement along this axis corresponds to an overall increase or decrease in coefficient size. This agrees with our previous observation that PC1 is correlated with the number of edges in a graph. For PC2, the loadings for the last four coefficients have the opposite sign from the first four, indicating that an increase in PC2 score correlates with an decrease in relative size of the last few coefficients.

\begin{table}[h]
\renewcommand{\arraystretch}{1.4}
\begin{center}
\resizebox{\columnwidth}{!}{   \begin{tabular}{|l|c|c|c|c|c|c|c|c| }
    \hline
     & $c_8$ & $c_7$& $c_6$ &
  $c_5$ & $c_4$ & $c_3$ &
$c_2$ & $c_1$\\ \hline
 PC1    & 0.34694 & 0.34683 & 0.34761 & 0.34876 & 0.35005
          & 0.35172 & 0.35579 & 0.379545
 \\ \hline
 PC2   & 0.54568 & 0.39243 & 0.24013 & 0.08614
          & $-$0.06850 & $-$0.22145 & $-$0.37150
  & $-$0.53983 \\ \hline

  \end{tabular}}
  \caption{PC loadings for two most significant principal components, PC1 and PC2, for the chromatic polynomial data for all graphs of order 9, with chromatic polynomial determined by eight coefficients $P_G(\la) = \la^9 +  c_8\la^8 + \ldots + c_1 \la$}
  \label{PCAloadingtable}
  \end{center}
\end{table}

PC loadings provide a way to think about principal components in terms of the coefficients of the chromatic polynomial, and in turn, based on the formulas in Theorem \ref{Farrell}, features of a graph. We hypothesize that PC1 measures the average size of the coefficient vector as a whole, and that PC2 detects the relative difference between the first and last coefficients.

\begin{figure}[ht!]
\centering
    \begin{subfigure}[]{0.32\textwidth}
         \includegraphics[width=0.9\textwidth]{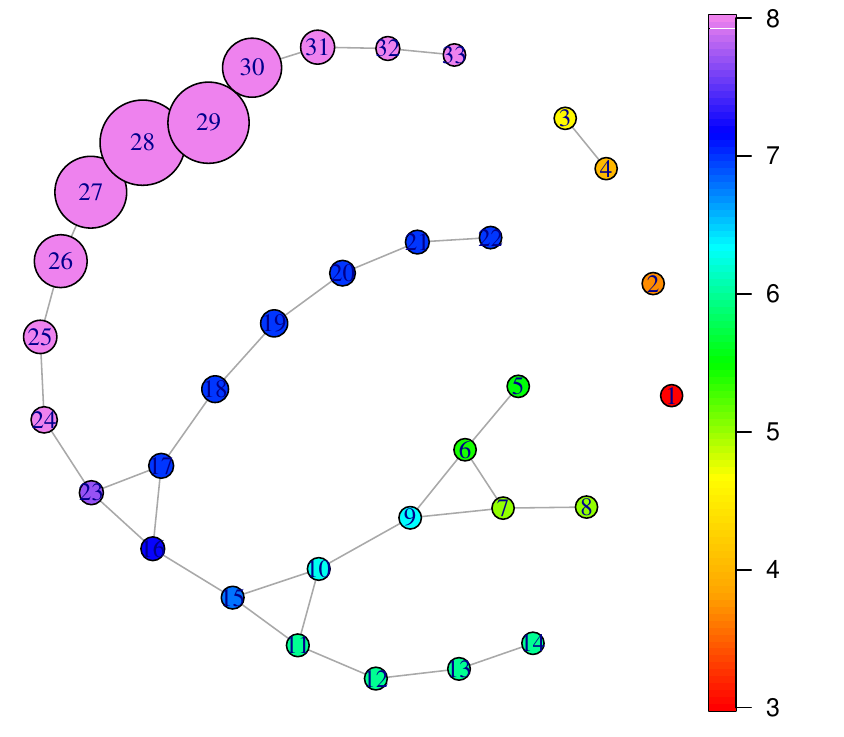}
         \caption{}
     \end{subfigure}
    \begin{subfigure}[]{0.32\textwidth}
         \includegraphics[width=0.9\textwidth]{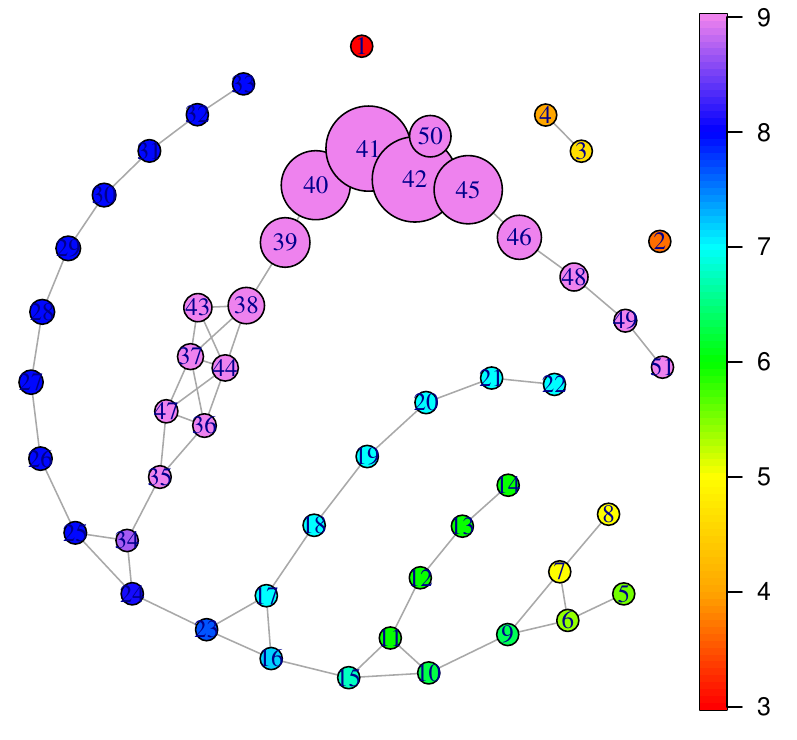}
         \caption{}
     \end{subfigure}
    \begin{subfigure}[]{0.32\textwidth}
         \includegraphics[width=0.9\textwidth]{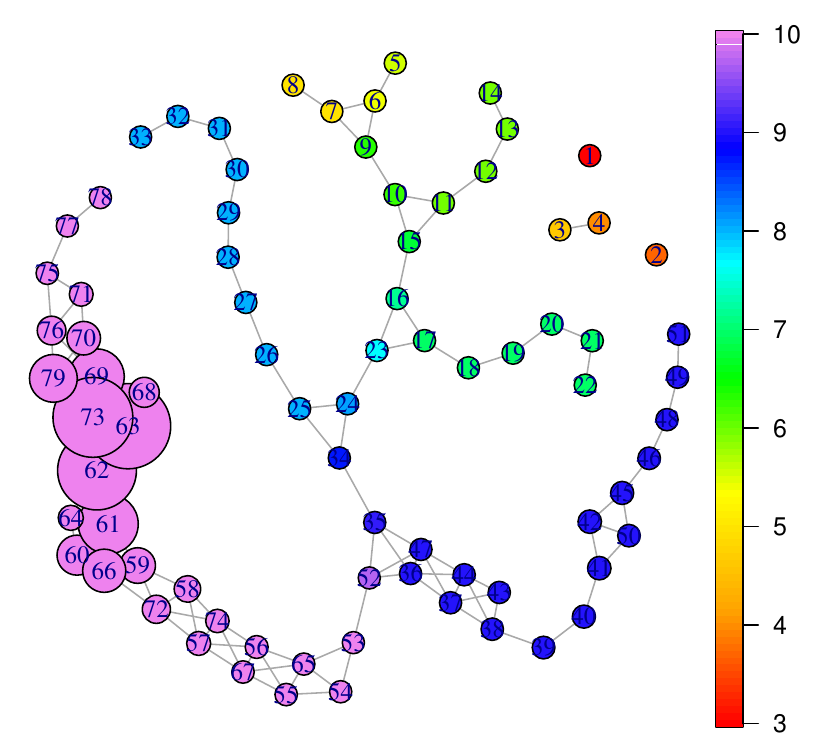}
         \caption{}
     \end{subfigure}
\caption{Stability of BMGraphs for chromatic data with respect to the number of vertices with graphs of order at most 8 (A), nine (B) and ten (C) with the Ball Mapper parameter  $\epsilon = 0.1$,  colored by the number of vertices.} 
\label{fig:ch_vertices_pers}
\end{figure}

\begin{figure}[ht!]
\centering
\includegraphics[width=0.7\textwidth]{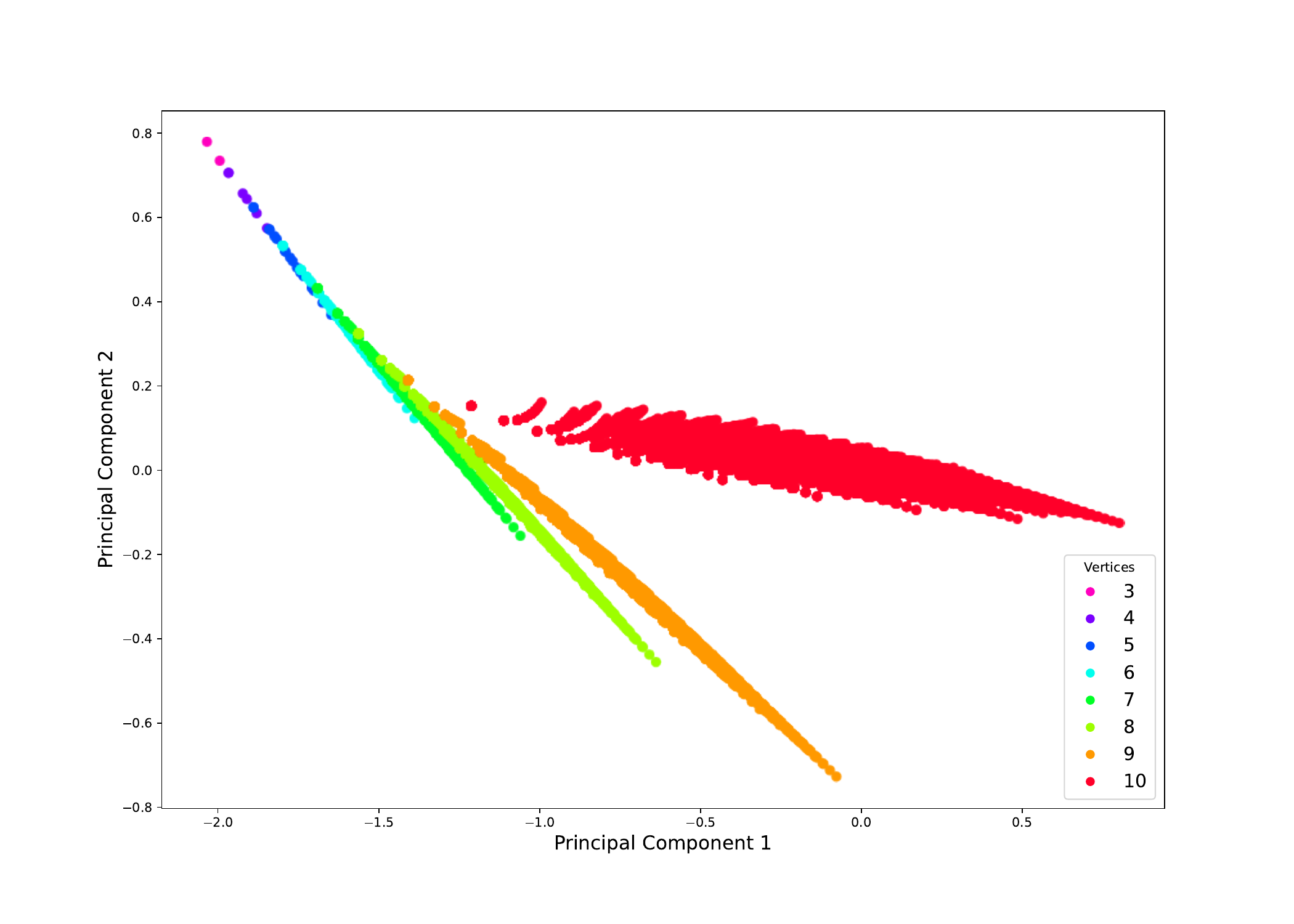}
   
\caption{A two-dimensional PCA projection of the chromatic polynomial data for all graphs with 3-10 vertices. The projection is based on the first two principal components for normalized data, colored by number of vertices.} 
\label{fig:ch_vertices_persPCA}
\end{figure}

\subsection{Chromatic data: Ball Mapper structure} \label{chromBM}

In this section we apply Ball Mapper to the  chromatic point cloud consisting of  coefficient vectors of all graphs up to $n = 10$ vertices. 

First, we provide the evidence for the stability of the Ball Mapper approach to the chromatic polynomial data with respect to the order of a graph. 
In Figures \ref{fig:ch_vertices_pers} and \ref{fig:ch_vertices_persPCA} we provide evidence for a persistent structure across data sets of graphs with order equal to eight, nine, and ten respectively. Graphs of each given order are clustered in long branch-like features, with neighboring branches of adjacent orders across all three BMGraphs. These branch-like features are also reflected in the two-dimensional PCA plot for all graphs up to 10 vertices combined, see Figure \ref{fig:ch_vertices_persPCA}. This structure invites and justifies exploring the data corresponding to graphs of a given order. 

Figure \ref{fig:ch_8branch} and Figure \ref{fig:ch_9branch} provide a detailed structure of branches consisting only of graphs with exactly 8 and 9 vertices respectively. Both Ball Mapper graphs have linear structure consistent with the PCA analysis in Section \ref{PCAdim}. To confirm the informal correspondence between the 1-dimensionality of the data obtained by the PCA approach and the ``linear" structure of the BMGraph,  we color each cluster in the BMGraph by the average PC scores of the graphs it contains, Figure \ref{fig:ch_9_zoom1_pca}. The remarkable agreement between the two methods suggests that the first principal direction corresponds to the linear feature of the Ball Mapper structure.

\begin{figure}[ht!]
\centering
  \begin{subfigure}[]{0.45\textwidth}\centering
         \includegraphics[width=0.7\textwidth]{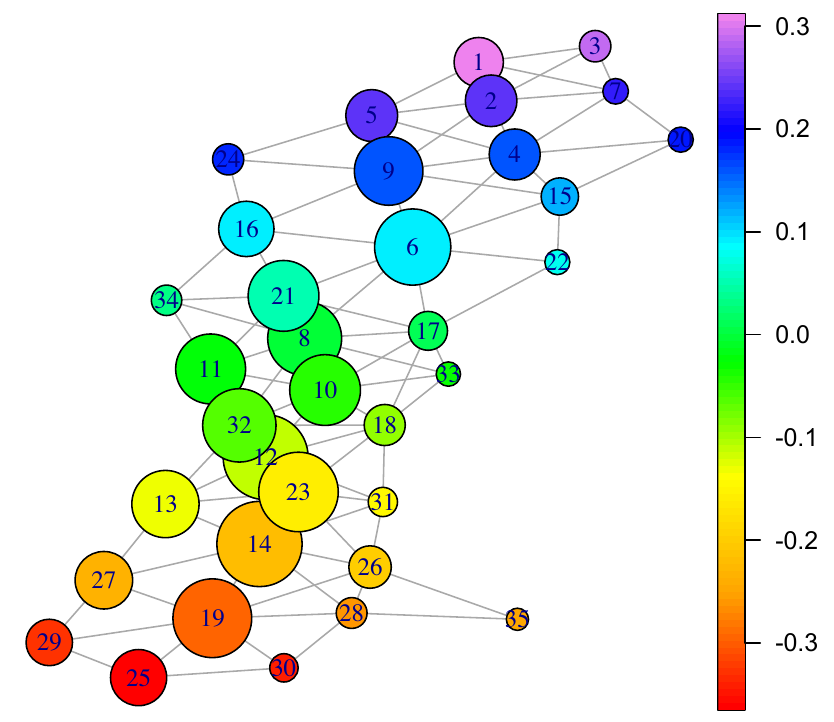}
         \caption{}
     \end{subfigure}
  \begin{subfigure}[]{0.45\textwidth}\centering
         \includegraphics[width=0.7\textwidth]{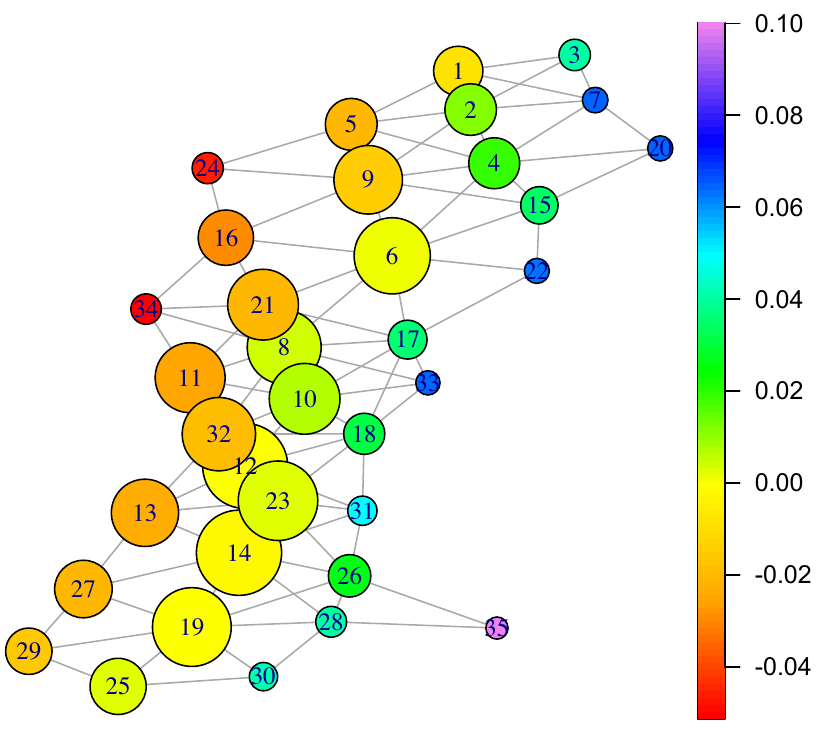}
         \caption{}
     \end{subfigure}
\caption{Clusters 9-12 in BMGraph of the Chromatic polynomial data shown in Figure \ref{fig:ch_9branch} at radius $\epsilon=0.06$ colored by the average PC1 score (A), and the average PC2 score (B) of graphs contained in each cluster.}
\label{fig:ch_9_zoom1_pca}
\end{figure}

\begin{figure}
\centering
    \begin{subfigure}[]{0.32\textwidth}
         \includegraphics[width=0.85\textwidth]{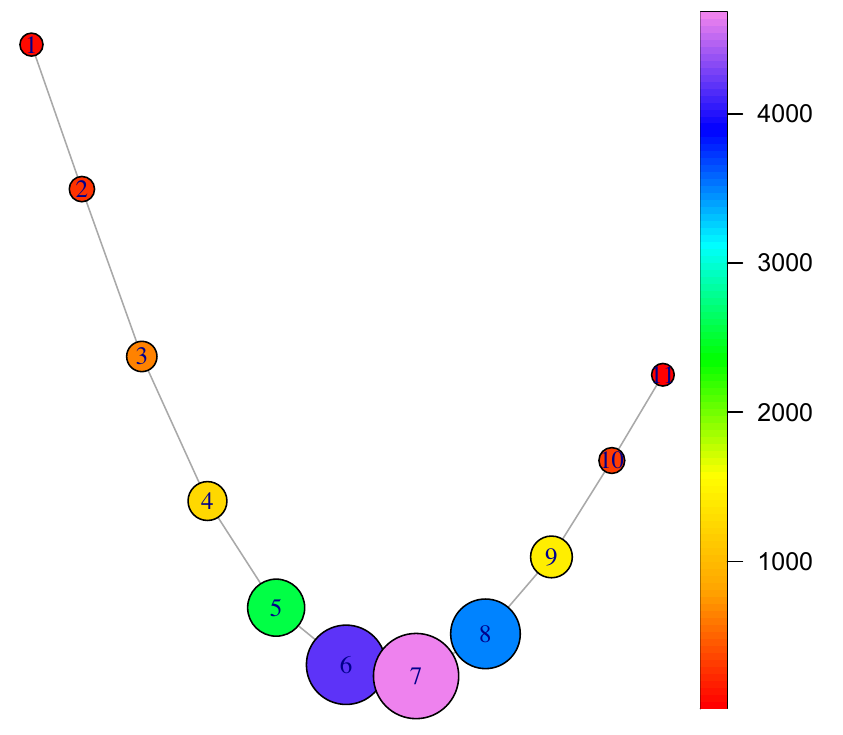}
         \caption{}
     \end{subfigure}
    \begin{subfigure}[]{0.32\textwidth}
         \includegraphics[width=0.85\textwidth]{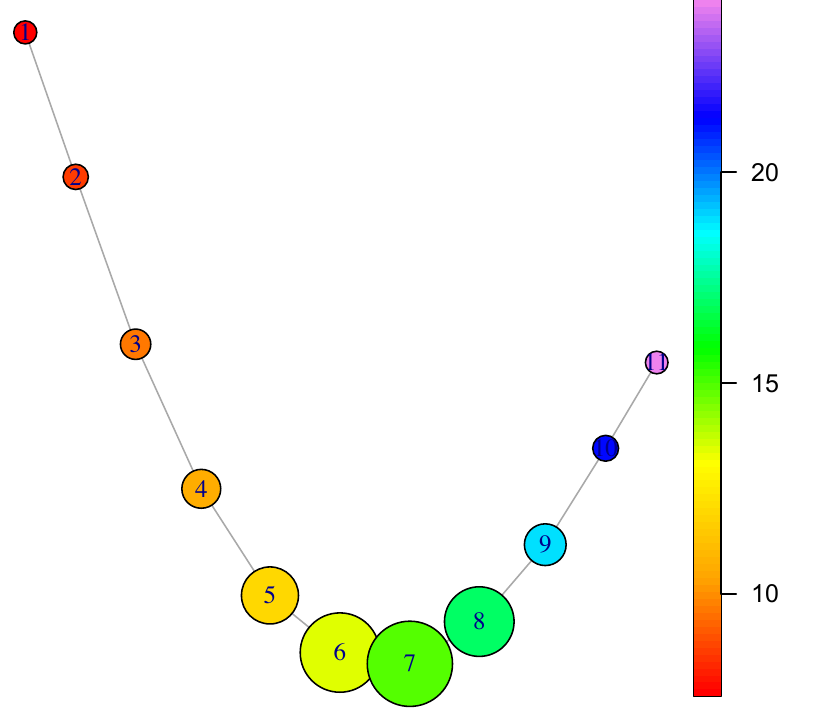}
         \caption{}
     \end{subfigure}
    \begin{subfigure}[]{0.32\textwidth}
         \includegraphics[width=0.85\textwidth]{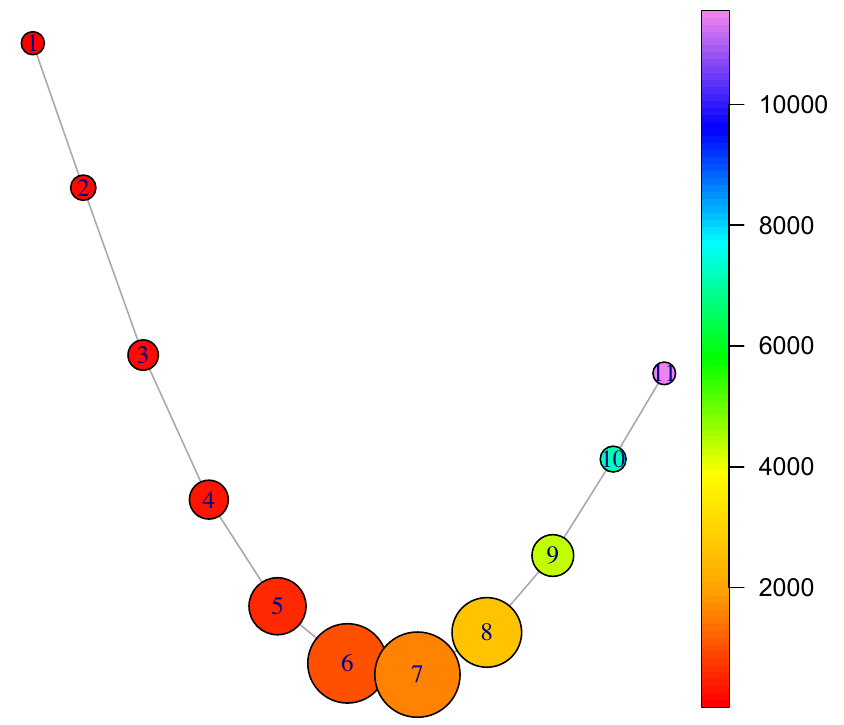}
         \caption{}
     \end{subfigure}
\caption{BMGraph of the chromatic polynomial data of all graphs with exactly 8 vertices and the Ball Mapper parameter  $\epsilon= 0.22$ colored by cluster size (A), number of edges (B), and the norm of the chromatic polynomial (C). }
\label{fig:ch_8branch}
\end{figure}

The linear direction within BMGraphs corresponds to the number of edges in a graph, see Figures  \ref{fig:ch_8branch}(B) and \ref{fig:ch_9branch}(B), as well as the $L^2$ norm of the chromatic coefficient vectors, see Figure \ref{fig:ch_8branch}(C), \ref{fig:ch_9branch}(C). This result can be explained by the fact that the number of edges $E$ contributes to the largest terms of each formula in Theorem \ref{Farrell}. 

\begin{figure}{h}
\centering
    \begin{subfigure}[]{0.32\textwidth}
         \includegraphics[width=0.85\textwidth]{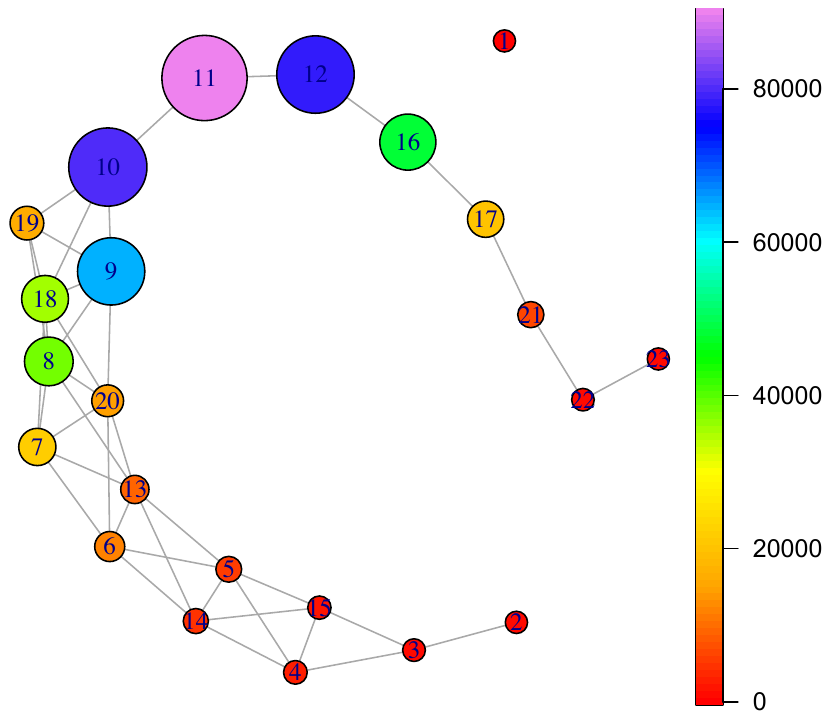}
         \caption{}
     \end{subfigure}
    \begin{subfigure}[]{0.32\textwidth}
         \includegraphics[width=0.85\textwidth]{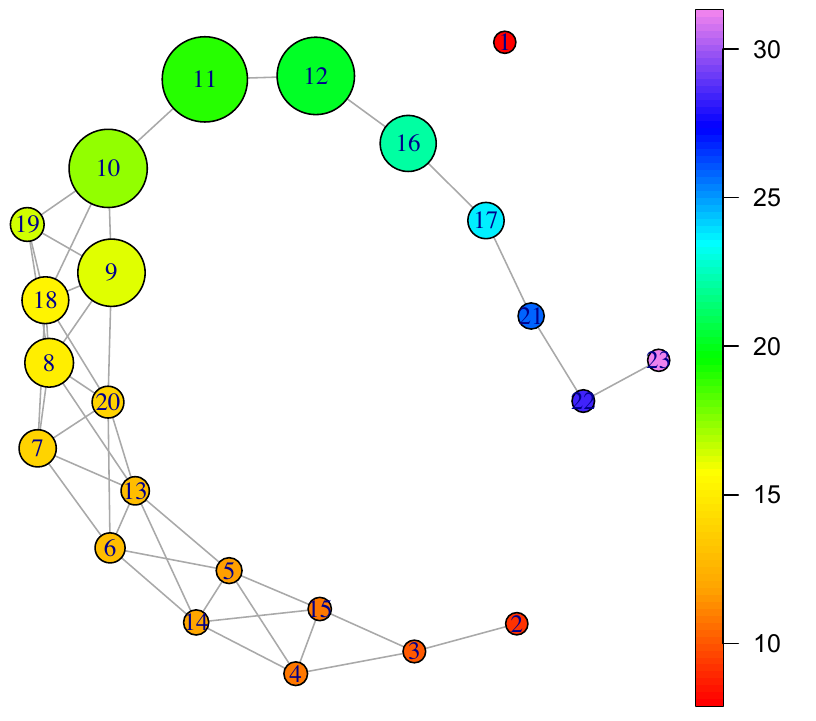}
         \caption{}
     \end{subfigure}
    \begin{subfigure}[]{0.32\textwidth}
         \includegraphics[width=0.85\textwidth]{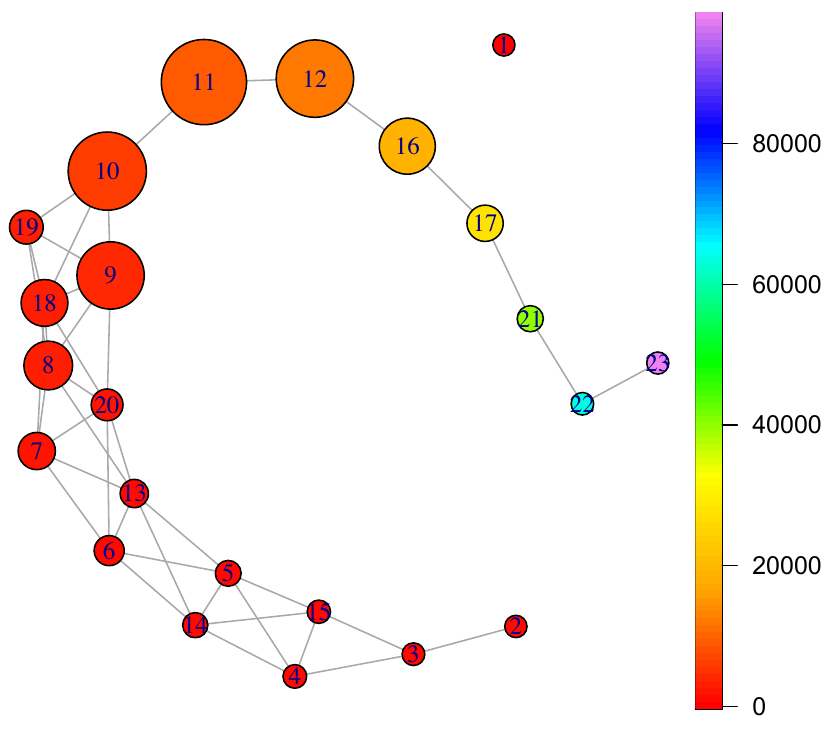}
         \caption{}
     \end{subfigure}
\caption{BMGraph of the chromatic polynomial data of all graphs with exactly 9 vertices and the Ball Mapper parameter  $\epsilon=0.15$ colored by cluster size (A), number of edges (B), and the norm of the chromatic polynomial (C). }
\label{fig:ch_9branch}
\end{figure}

Based on the BMGraph in Figure \ref{fig:ch_9branch}(A) and associated data, four largest clusters contain more than 50,000  out of total of 261079 points/graphs: cluster 9 (64442),  cluster 10 (80017), cluster 11 (90054),  and cluster 12 (78878). Therefore, we analyze the collection of these 4 clusters using a smaller value of radius $\epsilon$, see  BMGraphs on Figure \ref{fig:ch_9_zoom1}.  As expected,  the major PC1 direction of this particular embedding on the BMGraph  is  correlated with the number of edges  Figure \ref{fig:ch_9_zoom1}(A), and the PC2 direction is roughly consistent with the $L^2$ norm of the chromatic coefficients, Figure \ref{fig:ch_9_zoom1}(B). 
The average number of triangles in a graph contained in any given cluster, as well as the average degree distance seem to be varying in the PC2 direction, complementary to the main linear structure, from north-west to south-east in Figures  \ref{fig:ch_9_zoom1}(C-D). 

\begin{figure}
     \begin{subfigure}[]{0.23\textwidth}\centering
         \includegraphics[width=0.9\textwidth]{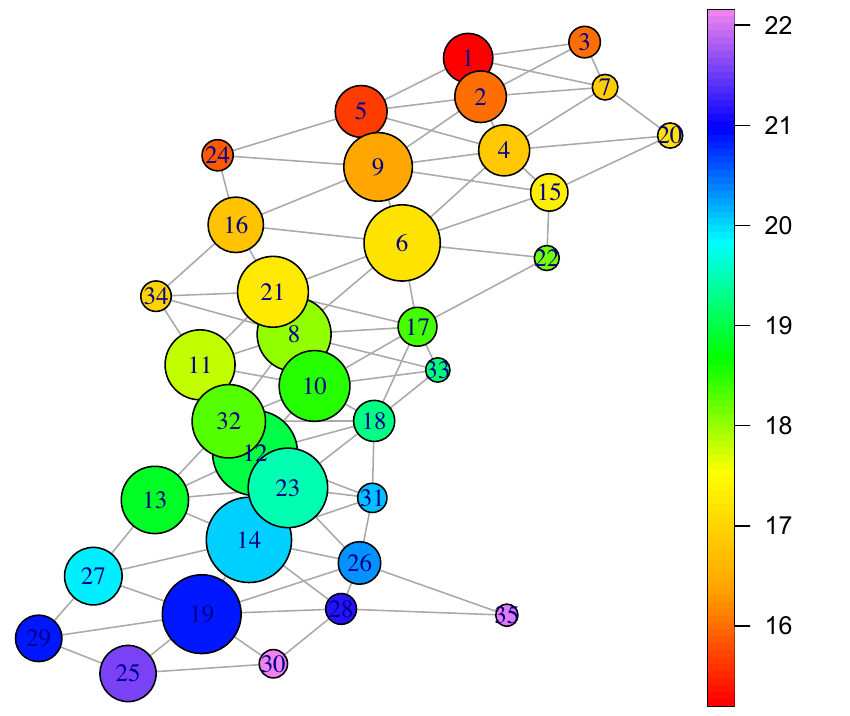}
         \caption{}
     \end{subfigure}
     \begin{subfigure}[]{0.23\textwidth}\centering
         \includegraphics[width=0.9\textwidth]{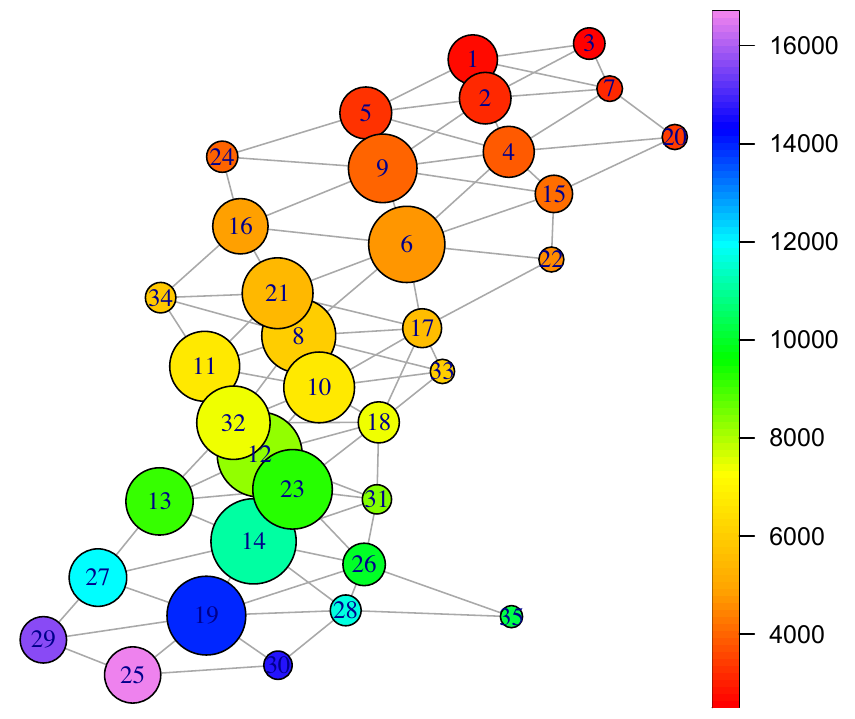}
         \caption{}
     \end{subfigure}
       \begin{subfigure}[]{0.23\textwidth}\centering
         \includegraphics[width=0.9\textwidth]{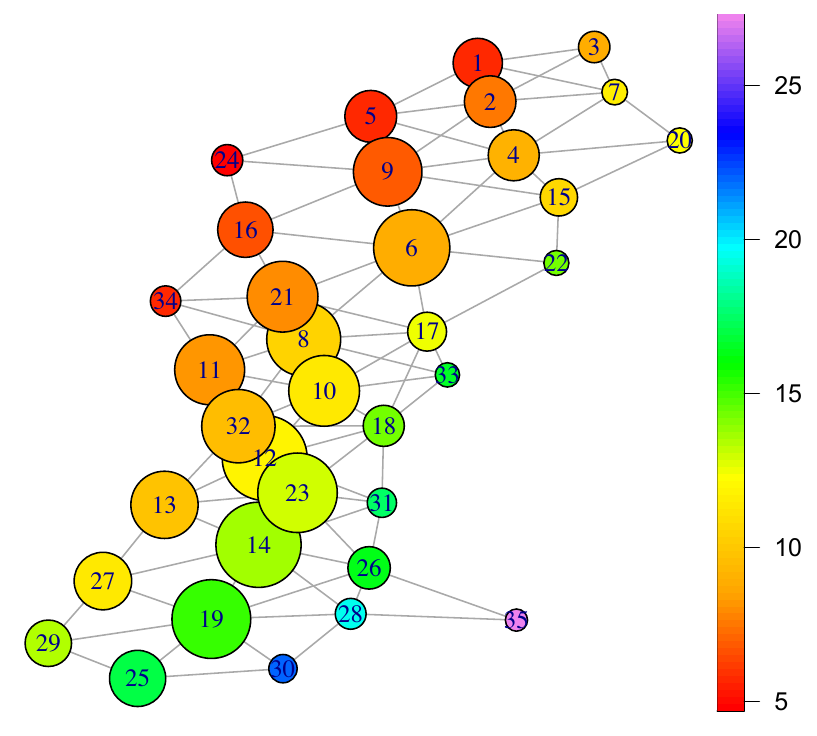}
         \caption{}
     \end{subfigure}
  \begin{subfigure}[]{0.23\textwidth}\centering
         \includegraphics[width=0.9\textwidth]{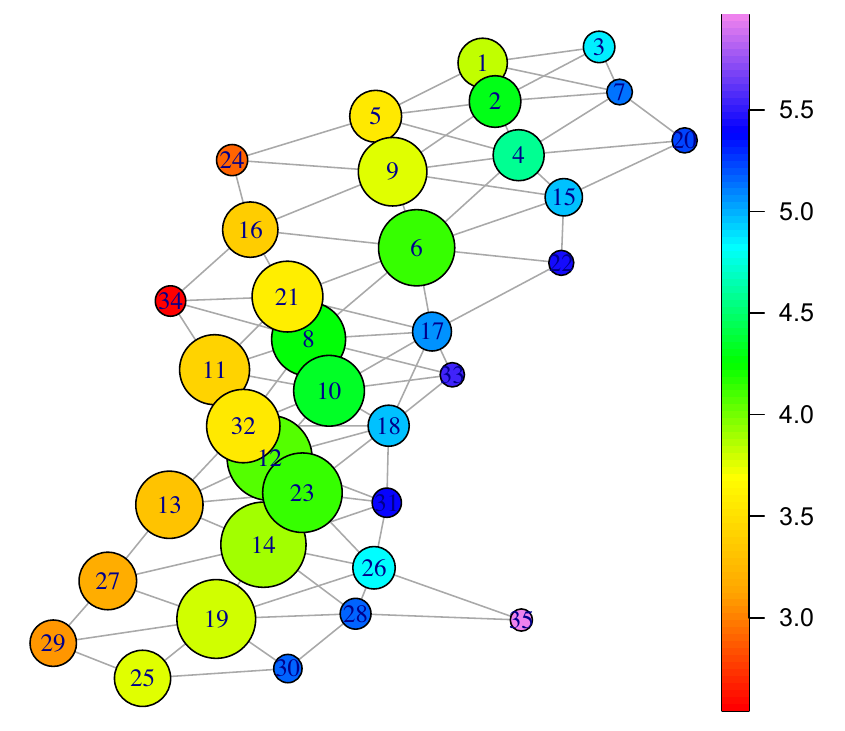}
         \caption{}
     \end{subfigure}
\caption{Clusters 9-12 in BMGraph of the chromatic polynomial data shown in  Figure \ref{fig:ch_9branch} at radius $\epsilon = 0.06$ colored by: 
number of edges (A) and  by $L^2$ norm of chromatic coefficients (B), average number of triangles in a graph (C), average difference between the maximal and minimal degree (D).}
\label{fig:ch_9_zoom1}
\end{figure} 


\subsection{Extremal chromatic polynomials and irregularity} \label{extrirr}
Section \ref{irreg_bkground} provides a few details on regular graphs and irregularity measures, but is by no means comprehensive. Note that there is a lot of ambiguity when it comes to irregularity measures. It was recently shown that no two of the following irregularity measures are mutually comparable: spectral irregularity, variance irregularity, Albertson index, and total irregularity  \cite{abdo2019graph}. 

 Section \ref{compress} offers a brief survey of results on graph compression, which uniformly increases or decreases values of a number of graph invariants related to the chromatic and Tutte polynomials. 

\begin{prop}
Suppose that $G'$ is obtained from $G$ by compression. Then
\begin{enumerate}
    \item The coefficients of the chromatic polynomial of $G'$ are smaller than those of  $G$: $|c_i(G')| \le |c_i(G)|$ for all $1 \le i \le n$. 
    \item Graph $G'$ has greater variance irregularity than $G$: $\sigma(G') \ge \sigma(G)$.
    \item Graph $G'$ has greater spectral irregularity than $G$: $\epsilon(G') \ge \epsilon(G)$.
\end{enumerate}
\end{prop}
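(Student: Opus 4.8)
The plan is to prove the three statements separately. Parts (2) and (3) will follow from one-line perturbation computations once we record the effect of a single compression, while for part (1) I would specialize a known monotonicity property of the Tutte polynomial. Throughout, write $G'=G_{u\to v}$ and set $S=N_G(u)-N_G(v)-\{v\}$, $s=|S|$. By definition the compression deletes the $s$ edges $\{x,u\}$ with $x\in S$ and adds the $s$ edges $\{x,v\}$. Consequently every vertex of $S$ retains its degree, $\deg_{G'}(u)=\deg_G(u)-s$, $\deg_{G'}(v)=\deg_G(v)+s$, and $n$, $m$, $\sum_i d_i=2m$ and $\overline d=2m/n$ are all unchanged. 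The only elementary fact used below is the inequality $\deg_G(u)-\deg_G(v)\le s$: since $N_G(u)$ is the disjoint union of $S$ and $N_G(u)\cap N_G[v]$ we have $\deg_G(u)-s=|N_G(u)\cap N_G[v]|$, and a short case check — according to whether $\{u,v\}\in E(G)$, in which case $u\in N_G(v)\setminus N_G(u)$ absorbs the contribution of $v\in N_G[v]$ — gives $|N_G(u)\cap N_G[v]|\le\deg_G(v)$.

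For part (2): since $\sum_i d_i=2m$ is invariant, $\sigma(G')-\sigma(G)=\frac1n\bigl(\sum_i d_i(G')^2-\sum_i d_i(G)^2\bigr)$, and only the terms at $u$ and $v$ change, so this equals $\frac1n\bigl[(\deg_G(u)-s)^2+(\deg_G(v)+s)^2-\deg_G(u)^2-\deg_G(v)^2\bigr]=\frac{2s}{n}\bigl(s-\deg_G(u)+\deg_G(v)\bigr)\ge0$ by the inequality above. For part (3): since $\overline d$ is invariant it suffices to show $\la_1(G')\ge\la_1(G)$. Let $\mathbf x$ be a unit Perron--Frobenius eigenvector of $A(G)$, so $x_w\ge0$ for every vertex $w$ and $\mathbf x^{\top}A(G)\mathbf x=\la_1(G)$. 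Since $G_{u\to v}$ and $G_{v\to u}$ are isomorphic (and hence share $\la_1$), we may assume without loss of generality that $x_v\ge x_u$. Comparing quadratic forms edge by edge, $\mathbf x^{\top}A(G')\mathbf x-\mathbf x^{\top}A(G)\mathbf x=2(x_v-x_u)\sum_{w\in S}x_w\ge0$, so the Rayleigh bound gives $\la_1(G')\ge\mathbf x^{\top}A(G')\mathbf x\ge\la_1(G)$, i.e.\ $\epsilon(G')\ge\epsilon(G)$.

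For part (1) the plan is to pass through the Tutte polynomial. For a connected graph $P_G(\la)=(-1)^{n-1}\la\,T_G(1-\la,0)$; writing $T_G(x,0)=\sum_i t_{i0}(G)x^i$ with all $t_{i0}(G)\ge0$, expansion of $(1-\la)^i$ yields $|c_{j+1}(G)|=\sum_i\binom{i}{j}t_{i0}(G)$ for each $j$, so that each $|c_k(G)|$ is a fixed nonnegative-integer combination of the $t_{i0}(G)$. Hence part (1) follows immediately from the inequalities $t_{i0}(G')\le t_{i0}(G)$ for all $i$, i.e.\ from the fact that compression does not increase the coefficients of $T_G(x,0)$; this is part of the monotonicity of the Tutte polynomial coefficients under the Kelmans transformation \cite{kahl2022extremal,csikvari2011applications}. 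A self-contained alternative would use Whitney's broken-circuit theorem, which identifies $|c_{n-i}(G)|$ with the number of $i$-element broken-circuit-free subsets of $E(G)$: one fixes an edge order placing the edges at $u$ last and constructs an injection from the broken-circuit-free subsets of $G'$ into those of $G$ of the same size.

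The main obstacle is the monotonicity $t_{i0}(G')\le t_{i0}(G)$ underlying part (1). Unlike (2) and (3), which are purely local perturbations of a single invariant, this is the statement that compression moves downward in the Tutte (equivalently, chromatic) polynomial poset, and any direct proof must control how the cycle space — or equivalently the broken-circuit complex — behaves when the entire bundle of edges at $u$ is transplanted onto $v$. My plan is to cite this step rather than reprove it, leaving the genuinely new content of the proposition in the elementary degree-sequence and Perron-eigenvector arguments for parts (2) and (3).
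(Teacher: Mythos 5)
Your proposal is correct, but it is organized quite differently from the paper's proof, which is a three-line citation: part (1) is quoted as Theorem 6.3 of \cite{csikvari2011applications}, part (2) as Lemma 2.3 of \cite{cutler2011extremal}, and part (3) as Theorem 2.1 of \cite{csikvari2009conjecture}. You instead prove (2) and (3) from scratch: the degree-sequence perturbation $\sigma(G')-\sigma(G)=\tfrac{2s}{n}\bigl(s-\deg_G(u)+\deg_G(v)\bigr)\ge 0$ (your case check that $\deg_G(u)-s=|N_G(u)\cap N_G[v]|\le\deg_G(v)$ is right, with the $\{u,v\}\in E$ case handled by $u\in N_G[v]\setminus N_G(u)$), and the Rayleigh-quotient comparison $\mathbf{x}^{\top}A(G')\mathbf{x}-\mathbf{x}^{\top}A(G)\mathbf{x}=2(x_v-x_u)\sum_{w\in S}x_w$ with the Perron vector, using $G_{u\to v}\cong G_{v\to u}$ to arrange $x_v\ge x_u$ (note that the relevant set $S$ changes when you swap direction, but the argument goes through verbatim). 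These are essentially re-proofs of the cited lemmas of Cutler--Radcliffe and Csikv\'ari, so what your route buys is self-containedness at the cost of length. For part (1) you, like the paper, ultimately cite the literature, but via a different reduction: you pass through $P_G(\la)=(-1)^{n-1}\la\,T_G(1-\la,0)$ and the identity $|c_{j+1}(G)|=\sum_i\binom{i}{j}t_{i0}(G)$ (correct, and easily checked on $K_3$), so that the chromatic statement follows from monotonicity of the Tutte coefficients under compression \cite{kahl2022extremal}; the paper cites the chromatic-coefficient statement directly as Theorem 6.3 of \cite{csikvari2011applications}. Your detour is slightly less economical but has the advantage of making explicit that part (1) is the $y=0$ shadow of the stronger Tutte-poset monotonicity. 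No gaps.
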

\begin{proof}
    Part (1) is a restatement of Theorem 6.3 of  \cite{csikvari2011applications}. Part (2) derived from Lemma 2.3 of  \cite{cutler2011extremal} with Definition \ref{varirrdef} in mind. Part (3) follows from Theorem 2.1 of \cite{csikvari2009conjecture} with respect to  Definition \ref{spectralirrdef}.
\end{proof}

As a consequence, if we consider graphs with a fixed number of vertices and edges, then part (1) claims that threshold graphs and their co-chromatic graphs are among the chromatically minimal and part (2) says that threshold graphs maximize  variance irregularity. 

The rest of this section provides visualizations of these observations and a large-scale comparison between the spectral and variance irregularity measures, highlighting the relation between threshold graphs (the extremal graphs with respect to compression) and the coefficients of the chromatic polynomial.
For example, the graphs with extremal PCA2 scores and minimal coefficient graphs, as in Figure \ref{fig:chPCA9features1}(C), are either threshold or co-chromatic with threshold graphs, see Figure \ref{fig:chPCA9thresh}.

\begin{figure}
\centering
         \includegraphics[height=0.4\textwidth]{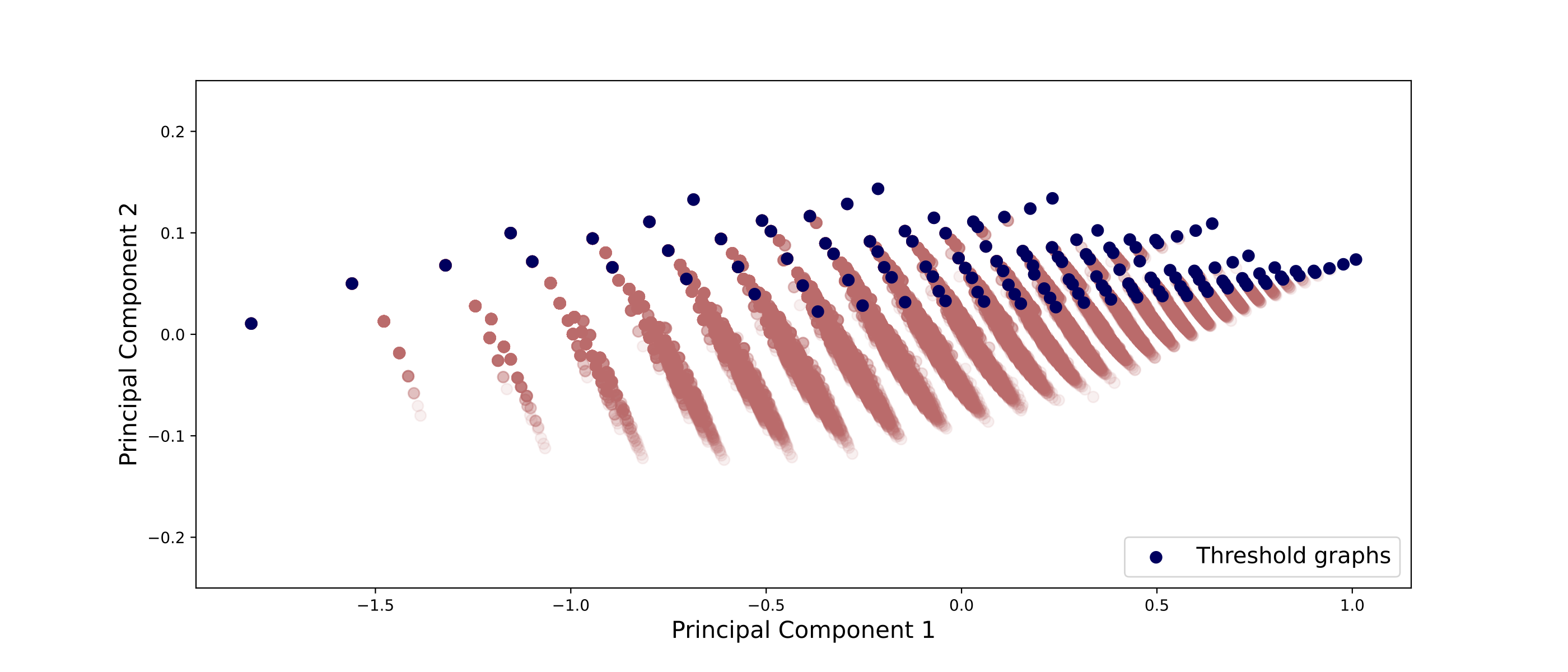}
\caption{A PCA projection of the chromatic polynomial data for all graphs of order 9 into 2-dimensions determined by the two most significant principal directions, PC1 and PC2, colored by presence of threshold graphs denoted by blue color. }
\label{fig:chPCA9thresh}
\end{figure}
The BMGraph \ref{fig:ch_9_zoom2}(A) is colored by spectral irregularity and (B) by variance irregularity emphasizing that, in these particular embeddings of  BMGraphs, clusters on the left- and right-hand sides contain graphs with extreme irregularity measures, with the average irregularity monotonically increasing from left to right.
The same phenomenon appears in Figure \ref{fig:ch_9_zoom1}(D) when the coloring function is the difference between maximal and minimal vertex degrees.

\begin{figure}
\centering
  \begin{subfigure}[]{0.23\textwidth}\centering
         \includegraphics[width=0.9\textwidth]{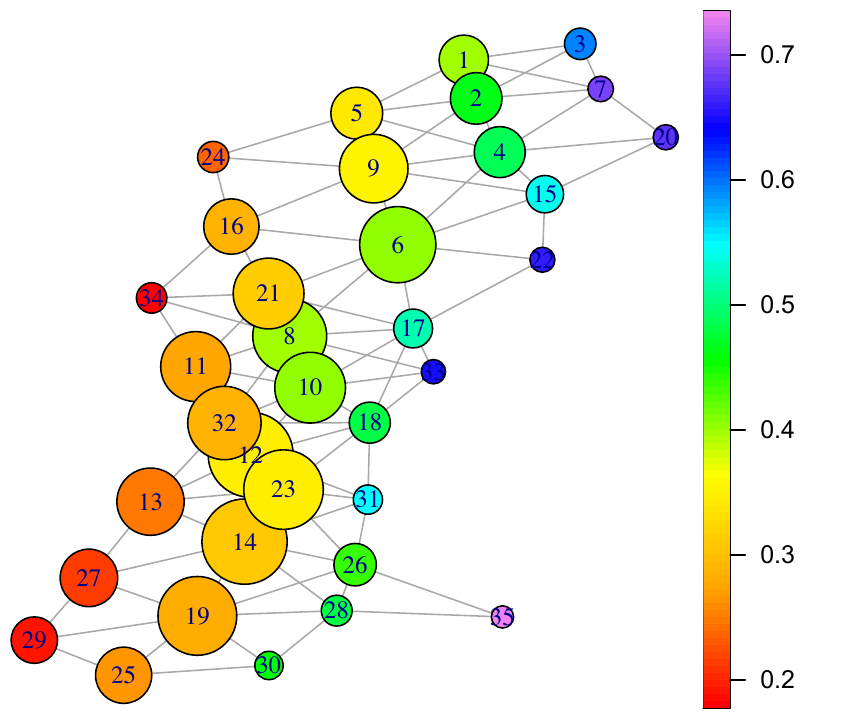}
         \caption{}
     \end{subfigure}
  \begin{subfigure}[]{0.23\textwidth}\centering
         \includegraphics[width=0.9\textwidth]{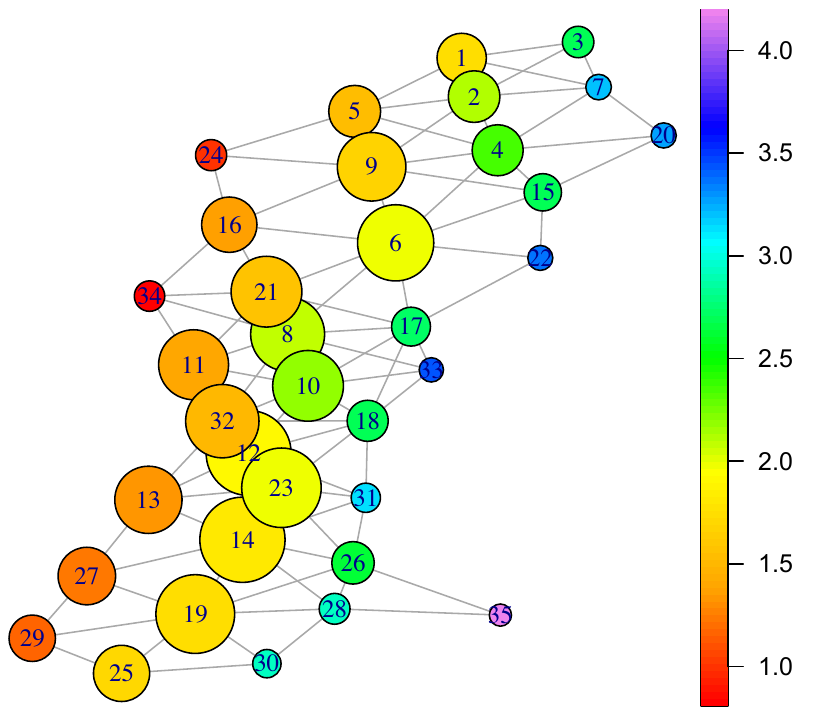}
         \caption{}
     \end{subfigure}
     \begin{subfigure}[]{0.23\textwidth}\centering
         \includegraphics[width=0.9\textwidth]{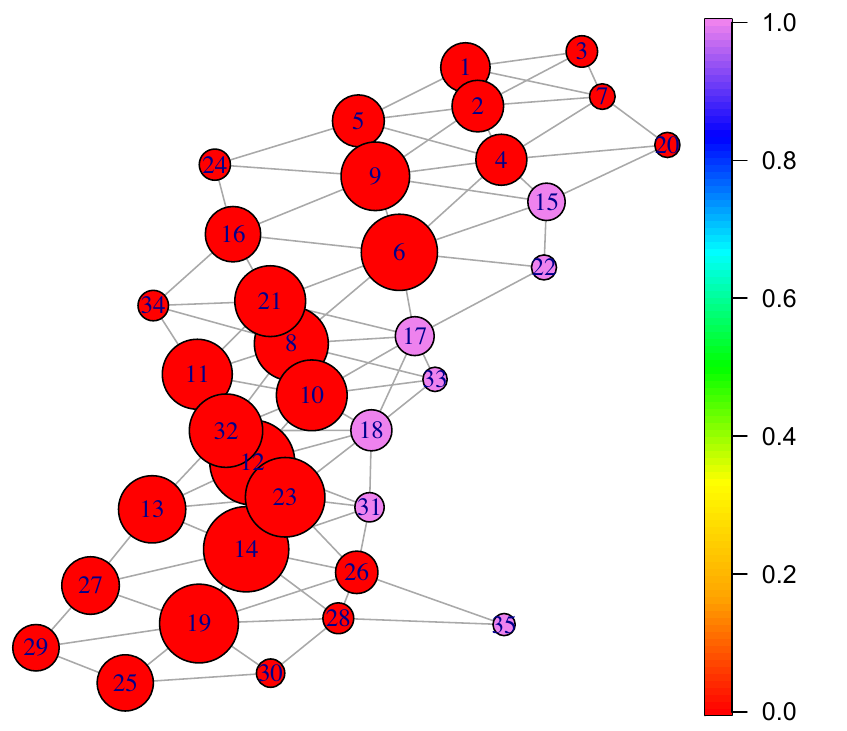}
         \caption{}
     \end{subfigure}
  \begin{subfigure}[]{0.23\textwidth}\centering
         \includegraphics[width=0.9\textwidth]{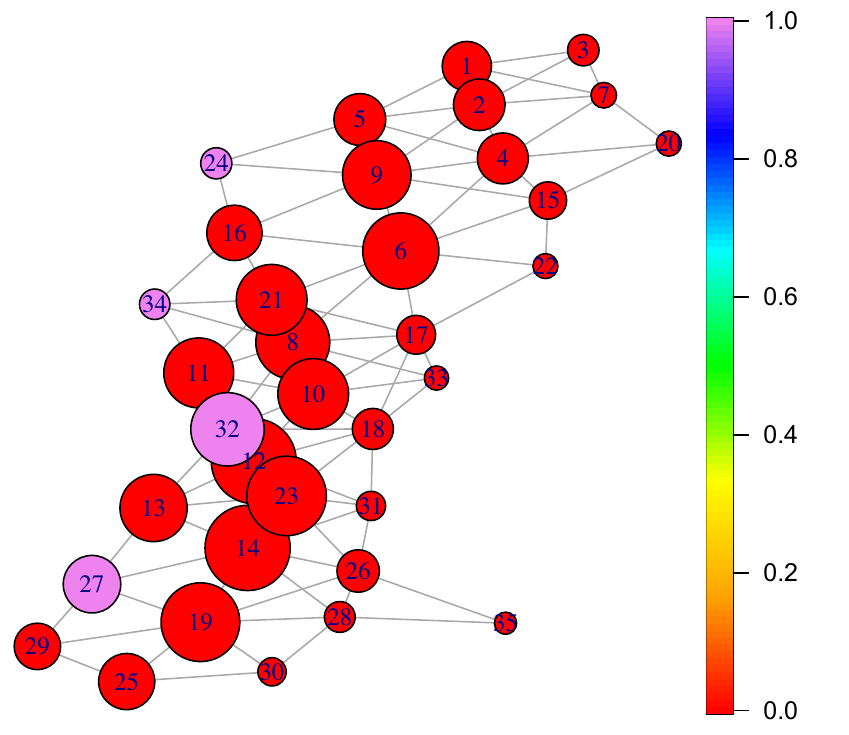}
         \caption{}
     \end{subfigure}
\caption{Clusters 9-12 in GMGraph of the Chromatic polynomial data shown in  Figure \ref{fig:ch_9branch} at radius $\epsilon = 0.06$ colored  by average measure of spectral irregularity in each cluster (A), variance irregularity (B), presence of chromatically minimal (purple color)   (C) and presence of chromatically maximal graphs (D)}.
\label{fig:ch_9_zoom2}
\end{figure}
These observations based on Ball Mapper graphs are matched by visualizations of the chromatic data  in the 2-dimensional PCA projection, see 
Figure \ref{fig:chPCA9features1}. In both projections, higher PC2 score corresponds to higher values of graph irregularity measures within the subset consisting of graphs with equal number of edges. However, the overall distributions of spectral Figures \ref{fig:chPCA9features1}(A), and variance irregularity Figures \ref{fig:chPCA9features1}(B) are different, with the variance regularity being lower on the middle left section. 

\begin{figure}
\centering
\begin{subfigure}[]{0.23\textwidth}\centering
         \includegraphics[height=0.8\textwidth]{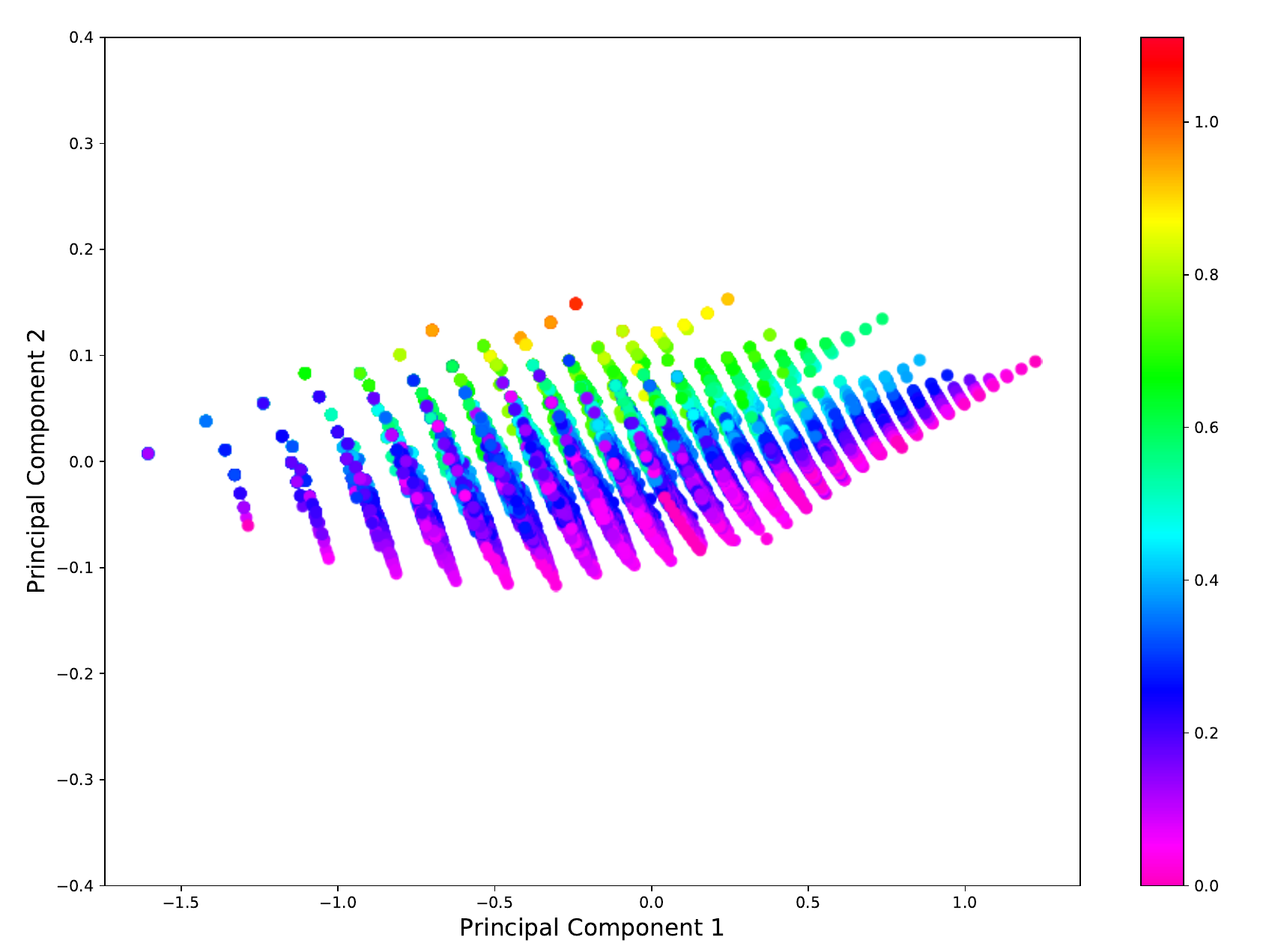}
         \caption{}
     \end{subfigure}
\begin{subfigure}[]{0.23\textwidth}\centering
         \includegraphics[height=0.8\textwidth]{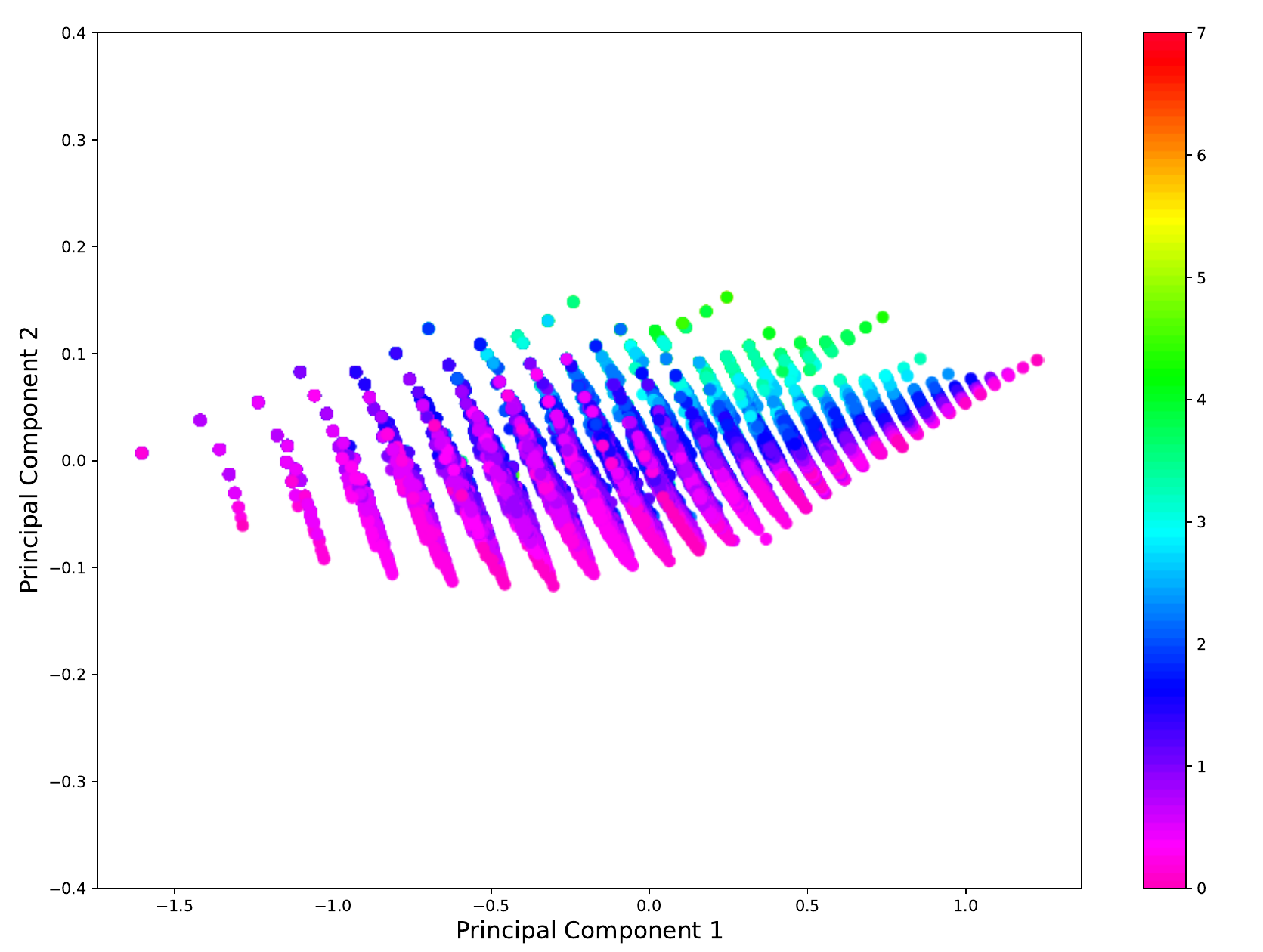}
         \caption{}
     \end{subfigure}
     \begin{subfigure}[]{0.23\textwidth}\centering
         \includegraphics[height=0.8\textwidth]{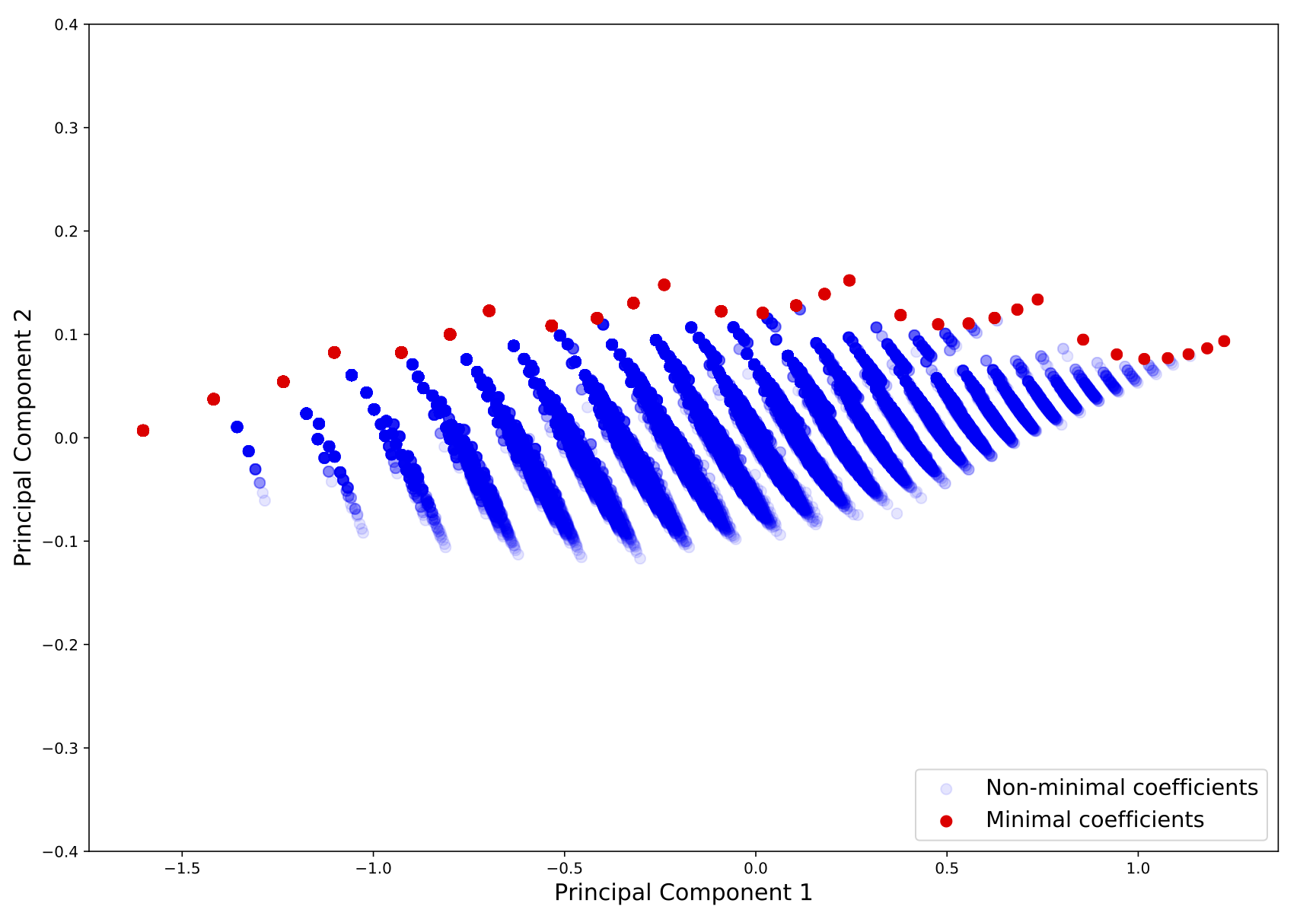}
         \caption{}
     \end{subfigure}
     \begin{subfigure}[]{0.23\textwidth}\centering
         \includegraphics[height=0.8\textwidth]{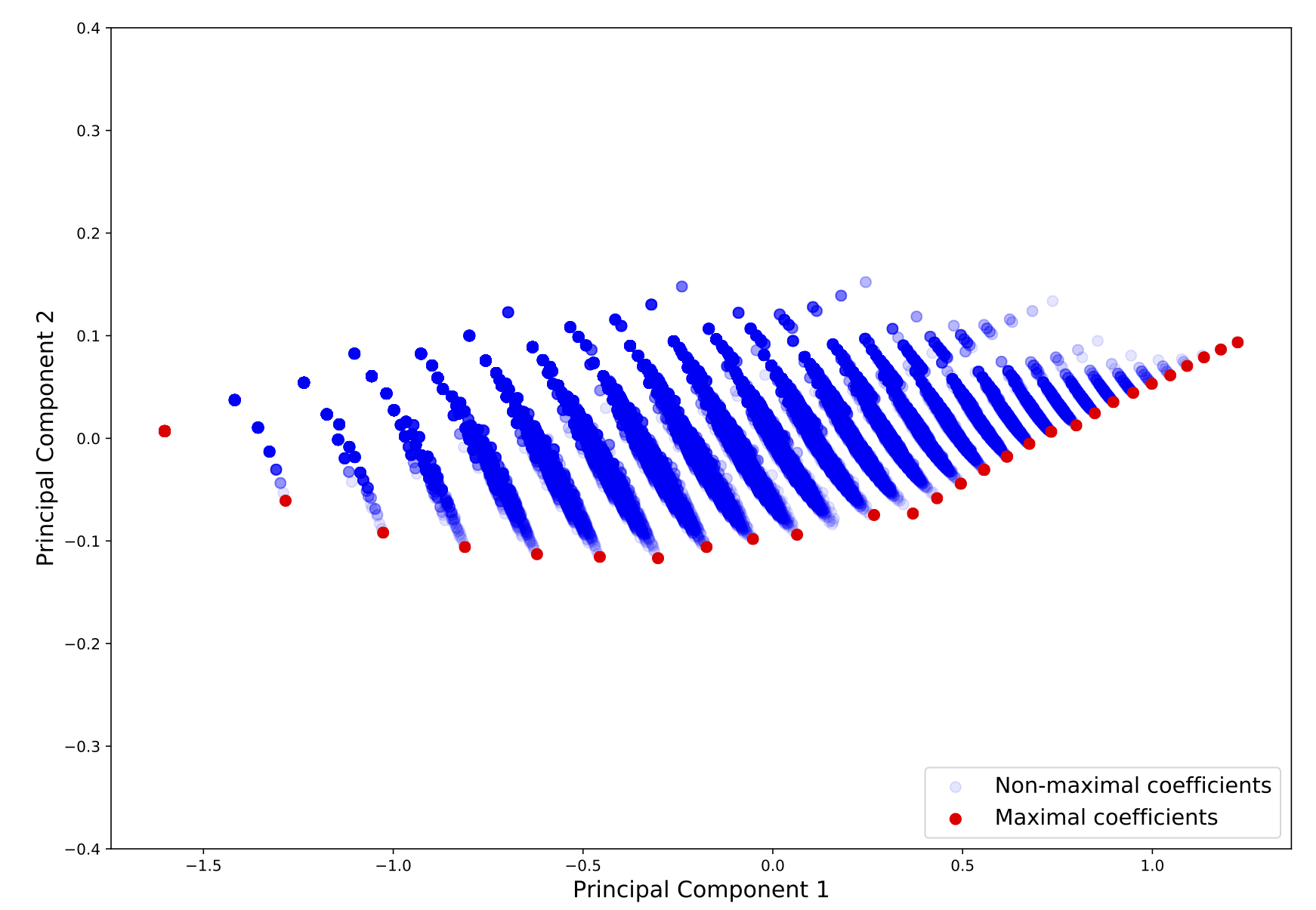}
         \caption{}
     \end{subfigure}
\caption{A PCA projection of the chromatic polynomial data for all graphs of order 9 into 2-dimensions determined by two most significant principal directions, PC1 and PC2, colored by spectral (A) and variance  irregularity measure (B), the presence of minimum-coefficient (C) and maximum-coefficient graphs (D) denoted by red color. }
\label{fig:chPCA9features1}
\end{figure}

The location of chromatically minimal and maximal graphs of order 9 (see Definition \ref{chromextremedef}) in the BMGraph and 2-dimensional PCA projection is shown on Figures \ref{fig:ch_9_zoom2}(C), \ref{fig:ch_9_zoom2}(D) and \ref{fig:chPCA9features1}(C), \ref{fig:chPCA9features1}(D), respectively. Both methods show that chromatically minimal graphs are found in regions of highest irregularity. This is expected since the families of minimal graphs characterized by Definition \ref{minfams} include members of the high-irregularity connected quasi-complete graphs and connected quasi-stars (Section \ref{irreg_bkground}). For example, it follows from Definition \ref{quasicomplete} that the connected quasi-complete graph $QC(n,m)$ is an element of the family $L(n,m)$.

While, to the best of our knowledge, there exists no similar characterization for chromatically maximal graphs, their location among the most regular graphs is supported by results from extremal graph theory. Lazebnik and others prove  partial characterizations for graphs which maximize evaluations of the  chromatic polynomial $P_G(\la)$ at given values of $\la$ \cite{lazebnik2019maximum}. These classes of graphs include Turan graphs (Definition \ref{turan}) and are generally similar to Turan graphs in structure. Turan graphs are extremal graphs which are chromatically unique \cite{chao1982maximally} and close to regular, see Section \ref{irreg_bkground}. The location of all Turan graphs of order 9 within the 2-dimensional PCA projection, among other chromatically maximal graphs, is highlighted in red in \ref{fig:chPCA9features1}(D).

Computations for graphs up to nine vertices imply that:  
\begin{itemize}
    \item The minimal and maximal elements of the chromatic polynomial poset $G(n,m)$ achieve the extremal PC2 scores among all graphs with $n$ vertices and $m$ edges when the principal components are computed over the data set of all $n$-vertex graphs.
\item The Turan graph $T(n,m)$ is maximal in the poset $G(n,m)$.
\end{itemize}

Further investigation of chromatically maximal graphs in our data suggests that 
there is a relationship between the ``tail" of the polynomial (the last few coefficients) and graph irregularity. 
 For example, consider the graphs in Figure \ref{fig:graphcompareex} with $n=9$ vertices and $E=11$ edges. The graph on the left is one of three co-chromatic non-isomorphic graphs which are minimal in $G(9,11)$, with higher irregularity measures and significantly smaller tail than the graph in Figure \ref{fig:graphcompareex}(B), which is maximal in $G(9,11).$

\begin{figure}
\centering
   \begin{subfigure}[]{0.45\textwidth}\centering
         \includegraphics[height=0.4\textwidth]{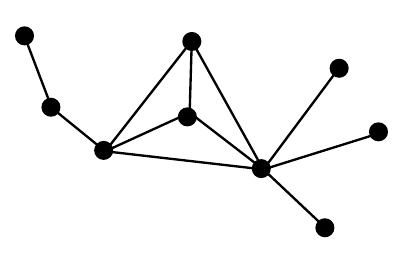}
         \caption{}
     \end{subfigure}
   \begin{subfigure}[]{0.45\textwidth}\centering
         \includegraphics[height=0.5\textwidth]{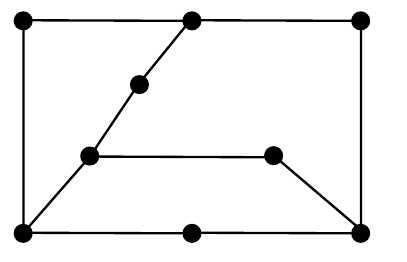}
         \caption{}
     \end{subfigure}
\caption{Extremal graphs in $G(9,11)$ with coefficient vectors 
equal to:  (1, 11, 51, 131, 205, 201, 121, 41, 6) (A), and (1, 11, 55, 165, 328, 446, 406, 224, 56) (B).} 
\label{fig:graphcompareex}
\end{figure}


\bibliographystyle{amsplain}
\bibliography{SazdanovicScofieldChromatic}{}
\end{document}